\documentclass[sigconf]{}
\pdfoutput=1

\newtheorem{thm}{Theorem}

\newtheorem{assump}{Assumption}

\newtheorem{lemma}{Lemma}
\newtheorem{definition}{Definition}

\newtheorem{rmq}{Remark}

\newtheorem*{assump*}{Assumption}

\newtheorem*{example*}{Example}
\newtheorem*{prop*}{Proposition}
\newtheorem*{thm*}{Theorem}
\newtheorem*{prv*}{Proof}




\usepackage{placeins}
\AtBeginDocument{%
  \providecommand\BibTeX{{%
    \normalfont B\kern-0.5em{\scshape i\kern-0.25em b}\kern-0.8em\TeX}}}

\setcopyright{acmcopyright}
\copyrightyear{2018}
\acmYear{2018}
\acmDOI{10.1145/1122445.1122456}

\acmConference[FOGA 2021]{XX}{XX}{XX}
\acmBooktitle{XX}
\acmPrice{15.00}
\acmISBN{978-1-4503-XXXX-X/18/06}

\newcommand{\newotc}[1]{{\textcolor{black}{#1}}}

\acmSubmissionID{123-A56-BU3}


\copyrightyear{2021}
\acmYear{2021}
\setcopyright{acmcopyright}\acmConference[FOGA '21]{Foundations of Genetic
Algorithms XVI}{September 6--8, 2021}{Virtual Event, Austria}
\acmBooktitle{Foundations of Genetic Algorithms XVI (FOGA '21), September 6--8,
2021, Virtual Event, Austria}
\acmPrice{15.00}
\acmDOI{10.1145/3450218.3477302}
\acmISBN{978-1-4503-8352-3/21/09}
\begin{document}

\title{Asymptotic convergence rates for averaging strategies}

\author{Laurent Meunier}
\email{laurentmeunier@fb.com}
\affiliation{%
  \institution{Paris-Dauphine University\\Facebook AI Research}
  \country{France}}
  
\author{Iskander Legheraba}
\email{iskander.legheraba@dauphine.psl.eu}
\affiliation{%
  \institution{Paris-Dauphine University}
    \country{France}}
\author{Yann Chevaleyre}
\email{yann.legheraba@dauphine.psl.eu}
\affiliation{%
  \institution{Paris-Dauphine University}
  \country{France}}
\author{Olivier Teytaud}
\email{oteytaud@fb.com}
\affiliation{%
  \institution{Facebook AI Research}
 \country{France}}

\renewcommand{\shortauthors}{L. Meunier, I. Legheraba, Y. Chevaleyre, O. Teytaud}

\begin{abstract}
      Parallel black box optimization consists in estimating the optimum of a function using $\lambda$ parallel evaluations of $f$. Averaging the $\mu$ best individuals among the $\lambda$ evaluations is known to provide better estimates of the optimum of a function than just picking up the best. In continuous domains, this averaging is typically just based on (possibly weighted) arithmetic means. Previous theoretical results were based on quadratic objective functions. In this paper, we extend the results to a wide class of functions, containing three times continuously differentiable functions with unique optimum. We prove formal rate of convergences and show they are indeed better than pure random search asymptotically in $\lambda$. We validate our theoretical findings with experiments on some standard black box functions.
\end{abstract}

\begin{CCSXML}
<ccs2012>
   <concept>
       <concept_id>10003752.10010061.10011795</concept_id>
       <concept_desc>Theory of computation~Random search heuristics</concept_desc>
       <concept_significance>500</concept_significance>
       </concept>
 </ccs2012>
\end{CCSXML}

\ccsdesc[500]{Theory of computation~Random search heuristics}

\keywords{Black-Box, Randomized Search Heuristics, Design of Experiments, Parallel Optimization, Evolutionary Computation}


\maketitle

\section{Introduction}
Finding the minimum of a function from a set of $\lambda$ points $(x_i)_{i\leq \lambda}$ and their images $(f(x_i))_{i\leq \lambda}$ is a standard task used for instance in hyper-parameter tuning \cite{bergstra2012random}, or control problems. While random search estimate of the optimum consists in returning $\arg\min f(x_i)_{i\leq \lambda}$, in this paper we focus on the similar strategy that consists in averaging the $\mu$ best samples, i.e. returning $\frac1\mu\sum_{i=1}^\mu x_{(i)}$ where $f(x_{(1)})\leq\ldots\leq f(x_{(\lambda)})$.

These kinds of strategies are used in many evolutionary algorithms such as CMA-ES. Although experiments show that these methods perform well, it is not still understood why taking the average of best points actually leads to a lower regret. In \cite{ppsnkbest}, it is proved in the case of quadratic functions that the regret is indeed lower for the averaging strategy than for pure random search. In this paper, we extend the result of this paper by proving convergence rates for a wide class of functions including three times continuously differentiable functions with unique optima.

\subsection{Related work}

\subsubsection{Better than picking up the best}
Given a finite number of samples $\lambda$ equipped with their fitness values, we can simply pick up the best, or average the ``best ones''~\cite{beyerbenefitofsex,ppsnkbest}, or apply a surrogate model~\cite{sm1,sm2,sm3,AST,sm4,bach}. Overall, the best is quite robust, but the surrogate or the averaging usually provides  better convergence rates. Using surrogate modeling is fast when the dimension is moderate and the objective function is smooth (simple regret in $O(\lambda^{-m/d})$ for $\lambda$ points in dimension $d$ with $m$ times differentiability, leading to superlinear rates in evolutionary computation~\cite{AST}). In this paper, we are interested in the rates obtained by averaging the best samples for a wide class of functions. We extend the results of~\cite{ppsnkbest} which only hold for the sphere function.

\subsubsection{Weighted averaging}
Among the various forms of averaging,
it has been proposed to take into account the fact that the sampling is not uniform (evolutionary algorithms in continuous domains typically use Gaussian sampling) in \cite{sumo}: we here simplify the analysis by considering a uniform sampling in a ball, though we acknowledge that this introduces the constraint that the optimum is indeed in the ball. \cite{arnoldweights,anneweights} have proposed weights depending on the fitness value, though they acknowledge a moderate impact: we here consider equal weights for the $\mu$ best.

\subsubsection{Choosing the selection rate}
The choice of the selection rate $\mu/\lambda$ is quite debated in evolutionary computation: one can find $\mu=\lambda/7$ \cite{amorales}, $\mu=\lambda/2$ \cite{cmsa}, $\mu=0.27\lambda$ \cite{escompr}, $\mu=\lambda/4$ \cite{HAN}, $\mu=\min(d,\lambda/4)$ \cite{chooselambda,fournierAlgorithmica} and still others in \cite{beyerbenefitofsex,jeb}. In this paper, we focus on the selection rate when the number of samples $\lambda$ is very large in the case of parallel optimization. In this case, the selection ratio would tend to $0$. We carefully analyze this ratio and derive convergence rates using this selection ratio.

\subsubsection{Taking into account many basins} While averaging the best samples, the non-uniqueness of an optimum might lead to averaging points coming from different basins. Thus we consider at first the case of a unique optimum and hence a unique basin. Then we aim to tackle the case where there are possibly different basins. Island models~\cite{islands} have also been proposed for taking into account different basins. \cite{ppsnkbest} has proposed a tool for adapting $\mu$ depending on the (non) quasi-convexity. In the present work, we extend the methodology proposed in \cite{ppsnkbest}.

\subsection{Outline}
In the present paper, we first introduce, in Section~\ref{sec:assumptions}, the large class of functions we will study, and study some useful properties of these functions in Section~\ref{sec:teclemmas}. Then, in Section~\ref{sec:randomsearch}, we prove upper and lower convergence rates for random search for these functions. In Section~\ref{sec:mubestrate}, we extend \cite{ppsnkbest} by showing that asymptotically in the number of samples $\lambda$, the handled functions satisfy a better convergence rate than random search. We then extend our results on wider classes of functions in Section~\ref{sec:wider}. Finally we validate experimentally our theoretical findings and compare with other parallel optimization methods.

\section{Beyond quadratic functions}
\label{sec:assumptions}
In the present section, we present the assumptions to extend the results from \cite{ppsnkbest} to the non-quadratic case. We will denote $B(0,r)$ the closed ball centered at $0$ of radius $r$ in $\mathbb{R}^d$ endowed with its canonical Euclidean norm denoted by $\lVert\cdot\rVert$. We will also denote by $\overset{\circ}{B}(0,r)$ the corresponding \emph{open} ball. All other balls intervening in what follows will also follow that notation. For any subset $S\subset B(0,r)$, we will denote $U(S)$ the uniform law on $S$. 

Let $f:B(0,r)\to \mathbb{R}$ be a continuous function for which we would like to find an optimum point $x^*$. The existence of such an optimum point is guaranteed by continuity on a compact set.
For the sake of simplicity, we assume that $f(x^{\star}) = 0$. We define the $h$-level sets of $f$ as follows.
\begin{definition}
Let $f:B(0,r)\to \mathbb{R}$ be a continuous function. The closed sublevel set of $f$ of level $h$ is defined as:
\begin{align*}
    S_{h}:=\{x\in B(0,r)\mid f(x)\leq h\}.
\end{align*}
\end{definition}
We now describe the assumptions we will make on the function $f$ that we optimize. 
\begin{assump}
\label{ass:principal}
$f:B(0,r)\to \mathbb{R}$ is a continuous function and admits a unique optimum point $x^\star$ such that $\lVert x^{\star}\rVert<r$. Moreover we assume that $f$ can be written:
\begin{align*}
f(x)=\left(x-x^\star\right)^T\mathbf{H}\left(x-x^\star\right)+ \left(\left(x-x^\star\right)^T\mathbf{H}\left(x-x^\star\right)\right)^{\alpha/2} \varepsilon(x-x^\star)
\end{align*}
for some bounded function $\varepsilon$ (there exists $M>0$ such that for all $x$, $\lvert \varepsilon(x)\rvert \leq M$), $\mathbf{H}$ a symmetric positive definite matrix and $\alpha>2$ a real number. 

\end{assump}
Note that $H$ is uniquely defined by the previous relation.
In the following we will denote by $e_1(\mathbf{H})$ and $e_d(\mathbf{H})$ respectively the smallest and the largest eigenvalue of $\mathbf{H}$. As $\mathbf{H}$ is positive definite, we have $0<e_1(\mathbf{H})\leq e_d(\mathbf{H})$. We will also set $\lVert x\rVert_{\mathbf{H}}=\sqrt{x^T\mathbf{H}x}$, which is a norm (\emph{the $\mathbf{H}$-norm}) on $\mathbb{R}^{d}$ as $\mathbf{H}$ is symmetric positive definite. We then have $f(x)=\lVert x-x^{\star}\rVert_{\mathbf{H}}^2+ \lVert x-x^{\star}\rVert_{\mathbf{H}}^{\alpha} \varepsilon(x-x^\star)$ 
\begin{rmq}[Why a unique optimum ?]The uniqueness of the optimum is an hypothesis required to avoid that chosen samples come from two or more wells for $f$. In this case the averaging strategy would lead to a mistaken point because points from the different wells would be averaged. \newotc{Nonetheless, multimodal functions can be tackled using our non-quasiconvexity trick (Section \ref{nonqc}).}\end{rmq}
	\begin{rmq}[Which functions $f$ satisfy Assumption~\ref{ass:principal}?] One may wonder if Assumption~\ref{ass:principal} is restrictive or not. We can remark that three times continuously differentiable functions satisfy the assumption with $\alpha=3$, as long as the \newotc{unique} optimum satisfies a strict second order stationary condition. \newotc{Also, we will see in Section \ref{invar} that results are immediately valid for strictly increasing transformations of any $f$ for which Assumption~\ref{ass:principal} holds, so that we indirectly include all piecewise linear functions as well as long as they have a unique optimum. } So the class of functions is very large, and in particular allows non symmetric functions to be treated, which might seem counter intuitive at first. 
\end{rmq}

The aim of this paper is to study a parallel optimization problem as follows. We sample $X_{1},\cdots,X_{\lambda}$ from the uniform distribution
on $B(0,r)$. Let $X_{(1)},\cdots,X_{(\lambda)}$ denote
the ordered random variables, where the order is given by the objective
function 
\[
f(X_{(1)})\leq\cdots\leq f(X_{(\lambda)}).
\]
We then introduce the $\mu$-best average 
\[
\overline{X}_{(\mu)}=\frac{1}{\mu}\sum_{i=1}^{\mu}X_{(i)}
\]

In the following of the paper, we will compare the standard random search algorithm (i.e. $\mu=1$) with the algorithm that consists in returning the average of the $\mu$ best points. To this end, we will study the expected simple regret for functions satisfying the assumption: \[\mathbb{E}\left[f(\overline{X}_{(\mu)})\right]\]

\section{Technical lemmas}
\label{sec:teclemmas}
In this section, we prove two technical lemmas on $f$ that will be useful to study the convergence of the algorithm. The first one shows that $f$ can be upper bounded and lower bounded by two spherical functions. 
\begin{lemma}
\label{lem:sandwich}
Under Assumption~\ref{ass:principal}, there exist two real numbers $0<l\leq L$, such that, for all $x\in B(0,r)$:
\begin{align}
\label{eq:lip-cond}
   l\lVert x-x^\star\rVert^2 \leq f(x)\leq L\lVert x-x^\star\rVert^2.
\end{align}
Moreover such $l$ and $L$ must satisfy  $0< l\leq e_1(\mathbf{H})\leq e_d(\mathbf{H})\leq L$.
\end{lemma}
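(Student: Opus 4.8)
The plan is to set $y=x-x^\star$ and exploit the factorized form $f(x)=\lVert y\rVert_{\mathbf{H}}^2\bigl(1+\lVert y\rVert_{\mathbf{H}}^{\alpha-2}\varepsilon(y)\bigr)$, combined with the elementary two-sided bound $e_1(\mathbf{H})\lVert y\rVert^2\le\lVert y\rVert_{\mathbf{H}}^2\le e_d(\mathbf{H})\lVert y\rVert^2$ obtained by diagonalizing $\mathbf{H}$. Since $x$ ranges over the compact ball $B(0,r)$ and $\lVert x^\star\rVert<r$, the quantity $\lVert y\rVert$ is bounded by $R:=r+\lVert x^\star\rVert$, hence $\lVert y\rVert_{\mathbf{H}}^{\alpha-2}\le D:=(e_d(\mathbf{H}))^{(\alpha-2)/2}R^{\alpha-2}$. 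Using $\lvert\varepsilon\rvert\le M$, the factor in parentheses is at most $1+MD$, which immediately yields the upper bound with $L=e_d(\mathbf{H})(1+MD)\ge e_d(\mathbf{H})$.

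The delicate step is the lower bound, since the same estimate only gives $f(x)\ge\lVert y\rVert_{\mathbf{H}}^2\bigl(1-M\lVert y\rVert_{\mathbf{H}}^{\alpha-2}\bigr)$, and the parenthesis can turn negative once $\lVert y\rVert_{\mathbf{H}}$ is large; this is where a boundedness argument alone fails. I would therefore split $B(0,r)$ into a neighborhood of the optimum and its complement. Choose $\delta>0$ small enough that $M\delta^{\alpha-2}\le\tfrac12$; then for every $x$ with $\lVert y\rVert_{\mathbf{H}}\le\delta$ one has $f(x)\ge\tfrac12\lVert y\rVert_{\mathbf{H}}^2\ge\tfrac12 e_1(\mathbf{H})\lVert y\rVert^2$. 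On the complementary set $K:=\{x\in B(0,r)\mid\lVert y\rVert_{\mathbf{H}}\ge\delta\}$, which is compact and does not contain $x^\star$, continuity of $f$ together with uniqueness of the optimum (so that $f>0$ on $K$) guarantees that $f$ attains a strictly positive minimum $m>0$; hence $f(x)\ge m\ge(m/R^2)\lVert y\rVert^2$ on $K$. Taking $l=\min\bigl(\tfrac12 e_1(\mathbf{H}),\,m/R^2\bigr)>0$ gives the global lower bound.

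Finally, to show that any admissible pair must satisfy $l\le e_1(\mathbf{H})\le e_d(\mathbf{H})\le L$, I would test the sandwich inequality along rays. Fixing a unit vector $v$ and setting $x=x^\star+tv$ with $t\to 0^+$, division by $t^2$ gives $l\le\lVert v\rVert_{\mathbf{H}}^2+t^{\alpha-2}\lVert v\rVert_{\mathbf{H}}^{\alpha}\varepsilon(tv)\le L$; since $\alpha>2$ the middle expression tends to $\lVert v\rVert_{\mathbf{H}}^2$, so $l\le\lVert v\rVert_{\mathbf{H}}^2\le L$ for every unit $v$. Choosing $v$ to be a unit eigenvector of $\mathbf{H}$ for the eigenvalue $e_1(\mathbf{H})$ yields $l\le e_1(\mathbf{H})$, and choosing one for $e_d(\mathbf{H})$ yields $e_d(\mathbf{H})\le L$, which together with $e_1(\mathbf{H})\le e_d(\mathbf{H})$ is the claimed chain. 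The only genuine difficulty is the lower bound away from the optimum, which is exactly where the hypothesis of a \emph{unique} optimum (rather than merely $f(x^\star)=0$) is indispensable.
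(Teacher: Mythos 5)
Your proof is correct and follows essentially the same route as the paper's: a split of $B(0,r)$ into a small $\mathbf{H}$-neighborhood of $x^\star$ (where $\lVert x-x^\star\rVert_{\mathbf{H}}^{\alpha-2}\varepsilon(x-x^\star)$ is forced above $-\tfrac12$) and its compact complement, on which uniqueness of the optimum and continuity yield a strictly positive minimum, together with the identical eigenvector-ray limit $t\to 0^+$ for the sharpness claim $l\leq e_1(\mathbf{H})\leq e_d(\mathbf{H})\leq L$. The only variation is cosmetic: you obtain the upper bound by the direct global estimate $L=e_d(\mathbf{H})(1+MD)$, whereas the paper applies the extreme-value argument to the ratio $\phi(x)=f(x)/\lVert x-x^\star\rVert^2$ on the compact complement to bound both sides at once.
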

\begin{proof}
As $\mathbf{H}$ is symmetric positive definite, we have the following
classical inequality for the $\mathbf{H}$-norm
\begin{equation}\label{eq:sym-mat-ineq}
e_{1}(\mathbf{H})\lVert x-x^{\star}\rVert^{2}\le\lVert x-x^{\star}\rVert_{\mathbf{H}}^{2}\le e_{d}(\mathbf{H})\lVert x-x^{\star}\rVert^{2}
\end{equation}
Now set for $x\in B(0,r)\setminus\{x^{\star}\}$
\[
\phi(x):=\frac{f(x)-f(x^{\star})}{\lVert x-x^{\star}\rVert^{2}}=\frac{\lVert x-x^{\star}\rVert_{\mathbf{H}}^{2}}{\lVert x-x^{\star}\rVert^{2}}(1+\lVert x-x^{\star}\rVert_{\mathbf{H}}^{\alpha-2}\varepsilon(x-x^{\star})).
\]
By the above inequalities, we have 
\[
e_{1}(\mathbf{H})^{(\alpha-2)/2}\lVert x-x^{\star}\rVert^{\alpha-2}\le\lVert x-x^{\star}\rVert_{\mathbf{H}}^{\alpha-2}\le e_{d}(\mathbf{H})^{(\alpha-2)/2}\lVert x-x^{\star}\rVert^{\alpha-2}.
\]
Thus, as $\alpha>2$, we obtain $\lVert x-x^{\star}\rVert_{\mathbf{H}}^{\alpha-2}\rightarrow_{x\rightarrow x^{\star}}0$.
By assumption, the function $\varepsilon$ is also bounded as $x\rightarrow x^{\star}$.
\\
We thus conclude that there exists $\delta>0$ such that, for all
$x\in\overset{\circ}{B}(x^{\star},\delta)$
\[
\frac{1}{2}e_{1}(\mathbf{H})\le\phi(x)\le2e_{d}(\mathbf{H}).
\]
Now notice that $B(0,r)\setminus\overset{\circ}{B}(x^{\star},\delta)$
is a closed subset of the compact set $B(0,r)$ hence it is also compact.
Moreover, by assumption $f$ is continuous on $B(0,r)$ and $f(x)>0=f(x^{\star})$
for all $x\neq x^{\star}.$ Hence $\phi$ is continuous and positive
on this compact set. Thus it attains its minimum and maximum on this
set and its minimum is positive. In particular, we can write, on this set, for
some $l_{0},L_{0}>0$
\[
l_{0}\le\phi(x)\le L_{0}.
\]
We now set $l=\min\{l_{0},\frac{1}{2}e_{1}(\mathbf{H})\}$. Note that
$l>0$ because $l_{0}>0$ and $e_{1}(\mathbf{H})>0$ (as $\mathbf{H}$
is positive definite). We also set $L=\max\{L_{0},2e_{1}(\mathbf{H})\}$
which is also positive. These are global bounds for $\phi$ which gives the first part of the result.\\
For the second part, let $\mathbf{u}_{1}$ be a normalized eigenvector
respectively associated to $e_{1}(\mathbf{H})$. Then 
\begin{align*}
\frac{f(x^{\star}+\epsilon\mathbf{u}_{1})}{\lVert\epsilon\mathbf{u}_{1}\rVert^{2}}=e_{1}(\mathbf{H})+\epsilon^{\alpha-2}\varepsilon(\epsilon\mathbf{u}_{1})
\end{align*}
Taking the limit as $\epsilon\to0$. we get that, if $l$ satisfies~\eqref{eq:lip-cond},
then $l\leq e_{1}(\mathbf{H})$. Similarly, we can prove that $L\geq e_{d}(\mathbf{H})$.\end{proof}
Secondly, we frame $S_h$ into two ellipsoids as $h\to 0$. This lemma is a consequence of the assumptions we make on $f$.
\begin{lemma}
\label{lemma:sandwich-set}
Under Assumption~\ref{ass:principal}, there exists $h_0\geq 0$ such that for $h\leq h_0$, we have $A_h\subset S_h\subset B_h$ where:
\begin{align*}
A_h:=\{x\mid  \lVert x-x^\star\rVert_{\mathbf{H}}\leq \phi_-(h)\}\\
B_h:=\{x\mid   \lVert x-x^\star\rVert_{\mathbf{H}}\leq \phi_+(h)\}
\end{align*}
with $\phi_-(h)$ and $\phi_+(h)$ two functions satisfying 
\begin{eqnarray*}\phi_-(h)&=&\sqrt{h}-\frac{M}{2}h^{(\alpha-1)/2}+o(h^{(\alpha-1)/2}) \\
\mbox{ and } \phi_+(h)&=&\sqrt{h}+\frac{m}{2}h^{(\alpha-1)/2}+o(h^{(\alpha-1)/2})\end{eqnarray*} when $h\to 0$ for some constants $m>0$ and $M>0$ which are respectively a (specific) lower and upper bound for $\varepsilon$.
\end{lemma}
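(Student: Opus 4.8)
The plan is to reduce the vector problem to a scalar one by comparing $f$ with the two one-dimensional functions obtained from the extreme values of $\varepsilon$, and then to invert these scalar functions near $0$. Write $\rho=\lVert x-x^\star\rVert_{\mathbf H}$, so that Assumption~\ref{ass:principal} gives $f(x)=\rho^2+\rho^\alpha\varepsilon(x-x^\star)$. Since $\varepsilon$ is bounded, I would pick constants $m,M>0$ with $-m\le \varepsilon\le M$ everywhere. Then for every $x$ one has the pointwise sandwich $g_-(\rho)\le f(x)\le g_+(\rho)$, where $g_+(\rho):=\rho^2+M\rho^\alpha$ and $g_-(\rho):=\rho^2-m\rho^\alpha$. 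I would first record that $g_+'(\rho)=2\rho+\alpha M\rho^{\alpha-1}>0$ and $g_-'(\rho)=\rho(2-\alpha m\rho^{\alpha-2})>0$ for all small enough $\rho>0$ (here $\alpha>2$ is crucial), so that each $g_\pm$ is a strictly increasing bijection from some $[0,\delta]$ onto $[0,g_\pm(\delta)]$; I then define $\phi_-:=g_+^{-1}$ and $\phi_+:=g_-^{-1}$ on the corresponding intervals.

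Next I would fix $h_0$ small enough that the whole analysis takes place inside this local regime. By Lemma~\ref{lem:sandwich}, $f(x)\le h$ forces $\lVert x-x^\star\rVert\le\sqrt{h/l}$, hence $\rho\le\sqrt{e_d(\mathbf H)\,h/l}$; choosing $h_0$ so that this quantity is $\le\delta$ guarantees that for $h\le h_0$ every point of $S_h$ has $\rho$ in the region where $g_-$ is increasing. The two inclusions then follow from monotonicity. If $x\in A_h$, then $\rho\le\phi_-(h)$, so $f(x)\le g_+(\rho)\le g_+(\phi_-(h))=h$, i.e.\ $x\in S_h$. Conversely, if $x\in S_h$, then $g_-(\rho)\le f(x)\le h=g_-(\phi_+(h))$, and since both $\rho$ and $\phi_+(h)$ lie in the increasing range of $g_-$ this yields $\rho\le\phi_+(h)$, i.e.\ $x\in B_h$. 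This proves $A_h\subset S_h\subset B_h$.

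It remains to expand $\phi_\pm$ as $h\to0$. For $\phi_-$, I would solve $\rho^2+M\rho^\alpha=h$ by the substitution $\rho=\sqrt h\,\psi$, which turns the equation into $\psi^2+M h^{(\alpha-2)/2}\psi^\alpha=1$. Setting $w:=h^{(\alpha-2)/2}$ and $G(\psi,w):=\psi^2+Mw\psi^\alpha-1$, we have $G(1,0)=0$ and $\partial_\psi G(1,0)=2\ne0$, so the implicit function theorem provides a $C^1$ solution $\psi(w)$ with $\psi(0)=1$ and $\psi'(0)=-M/2$. Hence $\psi(w)=1-\tfrac{M}{2}w+o(w)$ and $\phi_-(h)=\sqrt h\,\psi(w)=\sqrt h-\tfrac{M}{2}h^{(\alpha-1)/2}+o(h^{(\alpha-1)/2})$. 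The same argument applied to $\rho^2-m\rho^\alpha=h$ gives $\phi_+(h)=\sqrt h+\tfrac{m}{2}h^{(\alpha-1)/2}+o(h^{(\alpha-1)/2})$, which is the claimed pair of expansions.

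The main obstacle I anticipate is not any single computation but the bookkeeping that keeps the whole argument local: one must choose $\delta$ (for the monotonicity of $g_\pm$) and $h_0$ (shrinking $S_h$ inside the $\delta$-ball) compatibly, so that the inverse functions $\phi_\pm$ are well defined and the monotonicity arguments are legitimate on all of $S_h$. The asymptotic inversion itself is routine once phrased through the implicit function theorem in the variable $w=h^{(\alpha-2)/2}$, which neatly sidesteps the fact that $\alpha-2$ need not be an integer.
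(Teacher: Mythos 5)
Your proof is correct, and it diverges from the paper's own proof at precisely the one delicate point. Both arguments share the same skeleton: sandwich $f$ pointwise between $g_-(\rho)=\rho^2-m\rho^\alpha$ and $g_+(\rho)=\rho^2+M\rho^\alpha$ in the variable $\rho=\lVert x-x^\star\rVert_{\mathbf H}$, invert the scalar functions, and expand the inverses. The difference is how the non-monotonicity of $g_-$ is handled. The paper works globally on $B(0,r)$: there, $\{u\ge 0 : g_-(u)\le h\}=[0,\phi_+(h)]\cup[\tilde\phi(h),+\infty)$ has a far branch that must be excluded, and to do so the paper manufactures a \emph{specific} $m=\bigl(2r\sqrt{e_d(\mathbf H)}\bigr)^{-(\alpha-2)}$ from the positivity of $f$ on $B(0,r)\setminus\{x^\star\}$, chosen exactly so that $\tilde\phi(0)=m^{-1/(\alpha-2)}=2r\sqrt{e_d(\mathbf H)}$ strictly dominates $\lVert x-x^\star\rVert_{\mathbf H}$ over $B(0,r)$, whence $V_h\cap B(0,r)=\emptyset$ for small $h$ by continuity of $\tilde\phi$. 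You localize instead: Lemma~\ref{lem:sandwich} forces $\rho\le\sqrt{e_d(\mathbf H)h/l}$ on $S_h$, so for $h\le h_0$ the whole of $S_h$ sits in the increasing range of $g_-$ and the second root never enters the argument. This is a legitimate dependency (Lemma~\ref{lem:sandwich} precedes this lemma and does not use it), it works for \emph{any} constant $m$ with $\varepsilon\ge-m$ (e.g.\ $m=M$), and it is arguably more robust: the paper's route needs its $m$ to be simultaneously a valid pointwise lower bound ($\varepsilon\ge-m$) and small enough for the branch exclusion, a tension your localization dissolves at the modest price of a possibly larger constant in the expansion of $\phi_+$ (the lemma only asserts existence of such constants, so this is harmless). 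For the asymptotics, the paper argues by hand, setting $u(h)=\phi_-(h)-\sqrt h$ and extracting $u(h)\sim-\frac M2 h^{(\alpha-1)/2}$ from the defining equation; your rescaling $\rho=\sqrt h\,\psi$, $w=h^{(\alpha-2)/2}$, with the implicit function theorem at $(\psi,w)=(1,0)$, is an equivalent and tidy packaging — just add one line noting that the implicit-function branch is the unique positive root near $\psi=1$ and hence coincides with $\phi_\pm(h)/\sqrt h$ (for $g_-$, the large second root stays far from $1$ when $w$ is small).

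One step you leave implicit should be made explicit. The sets $A_h$ and $B_h$ are ellipsoids in $\mathbb{R}^d$, while $S_h\subset B(0,r)$ by definition and the pointwise sandwich $g_-(\rho)\le f(x)\le g_+(\rho)$ only makes sense where $f$ is defined. So for the inclusion $A_h\subset S_h$ you must additionally check $A_h\subset B(0,r)$ for $h\le h_0$; the paper devotes a separate argument to exactly this, via $\lVert x\rVert\le\lVert x^\star\rVert+e_1(\mathbf H)^{-1/2}\phi_-(h)$ together with the standing hypothesis $\lVert x^\star\rVert<r$. Your bookkeeping paragraph shrinks $S_h$ into the local regime but says nothing about $A_h$. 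The repair is one further shrinking of $h_0$, using tools you already invoke, so this is an omission rather than a flaw in the approach.
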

\begin{proof}
By assumption $\lvert\varepsilon\rvert\leq M$, hence we have: 
\begin{align*}
\{x\in B(0,r) & \mid\lVert x-x^{\star}\rVert_{\mathbf{H}}^{2}+M\lVert x-x^{\star}\rVert_{\mathbf{H}}^{\alpha}\leq h\}\subset S_{h}
\end{align*}
Let $g\colon u\mapsto u^{2}+Mu^{\alpha}$. This is a continuous,
strictly increasing function on $[0,+\infty)$. By a classical consequence
of the intermediate value theorem, this implies that $g$ admits a
continuous, strictly increasing inverse function. Note that $g(0)=0$
hence $g^{-1}(0)=0$. Thus we can write $\{u\geq 0|u^{2}+Mu^{\alpha}\le h\}=[0,g^{-1}(h)]$.
We now denote $g^{-1}$ by $\phi_{-}$. As $\phi_{-}$ is non-decreasing,
we get
\begin{align*}
\{x\in B(0,r) & \mid\lVert x-x^{\star}\rVert_{\mathbf{H}}^{2}+M\lVert x-x^{\star}\rVert_{\mathbf{H}}^{\alpha}\leq h\}=A_{h}\cap B(0,r)
\end{align*}
Now observe that for $h$ sufficiently small
\[
\{x\in B(0,r)\mid\lVert x-x^{\star}\rVert_{\mathbf{H}}^{2}+M\lVert x-x^{\star}\rVert_{\mathbf{H}}^{\alpha}\leq h\}=A_{h}.
\]
Indeed, if $x\in A_{h}$, we have by the triangle inequality and~\eqref{eq:sym-mat-ineq}
\begin{align*}
\lVert x\rVert & \le\lVert x^{\star}\rVert+\lVert x-x^{\star}\rVert\\
 & \le\lVert x^{\star}\rVert+e_{1}(\mathbf{H})^{-1/2}\lVert x-x^{\star}\rVert_{\mathbf{H}}\\
 & \le\lVert x^{\star}\rVert+e_{1}(\mathbf{H})^{-1/2}\phi_{-}(h)
\end{align*}
Recall that by assumption $\lVert x^{\star}\rVert<r$ and let $\delta=r-\lVert x^{\star}\rVert$.
As $\phi_{-}(h)\rightarrow_{h\rightarrow0}0$, for $h$ sufficiently
small, we have $e_{1}(\mathbf{H})^{-1/2}\phi_{-}(h)\le\delta$ hence
$\lVert x\rVert\le r$ for $h$ sufficiently small, which gives the inclusion $A_h \subset S_h$.\\
For the asymptotics of $\phi_{-}$, as we have by definition $\phi_{-}(h)^{2}(1+M\phi_{-}(h)^{\alpha-2})=h$,
and as $\phi_{-}(h)\rightarrow_{h\rightarrow0}0$ we deduce that $\phi_{-}(h)\sim_{0}\sqrt{h}$.
Let us define $u(h)=\phi_{-}(h)-\sqrt{h}$. We have $u(h)\in o(\sqrt{h})$.
We then compute: 
\begin{align*}
(\sqrt{h}+u(h))^{2}+M(\sqrt{h}+u(h))^{\alpha}=h
\end{align*}
This gives
\begin{align*}
u(h)(u(h)+2\sqrt{h}) & =-Mh^{\alpha/2}(1+\frac{u(h)}{\sqrt{h}})^{\alpha}\\
u(h)(\frac{u(h)}{2\sqrt{h}}+1) & =-\frac{M}{2}h^{(\alpha-1)/2}(1+\frac{u(h)}{\sqrt{h}})^{\alpha}
\end{align*}
As $u(h)\in o(\sqrt{h})$ for $h\rightarrow0$, we obtain
\[
u(h)\sim-\frac{M}{2}h^{(\alpha-1)/2}.
\]
which concludes for $\phi_{-}$.

On the other side, we recall that $f(x)>0$ for all $x\neq x^{\star}$
as $x^{\star}$ is the unique minimum of $f$ on $B(0,r)$. Write
\[
0<\lVert x-x^{\star}\rVert_{\mathbf{H}}^{2}(1+\lVert x-x^{\star}\rVert_{\mathbf{H}}^{\alpha-2}\varepsilon(x-x^{\star})).
\]
Now observe that, as $\lVert x^{\star}\rVert<r$, we have for $x\in B(0,r)$,
by the triangle inequality, $\lVert x-x^{\star}\rVert<2r$. Hence,
by the classical inequality for the $\mathbf{H}$-norm~\eqref{eq:sym-mat-ineq}, we get
\begin{align*}
\varepsilon(x-x^{\star}) & >-\frac{1}{\lVert x-x^{\star}\rVert_{\mathbf{H}}^{\alpha-2}}\geq-\left(\sqrt{e_{d}(\mathbf{H})}2r\right)^{-(\alpha-2)}=:-m
\end{align*}
So we have: 
\begin{align*}
S_{h}\subset\{x\in B(0,r) & \mid\lVert x-x^{\star}\rVert_{\mathbf{H}}^{2}-m\lVert x-x^{\star}\rVert_{\mathbf{H}}^{\alpha}\leq h\}
\end{align*}
The function $g\colon u\mapsto u^{2}-mu^{\alpha}$ is differentiable.
A study of the derivative shows that $g$ is continuous, strictly
increasing on $[0,r_{0}]$ and continuous, strictly decreasing on
$[r_{0},+\infty[$ where $r_{0}=(\frac{2}{\alpha m})^{1/(\alpha-2)}$.
Hence $g_{|[0,r_{0}]}$ admits a continuous strictly increasing inverse
$\phi_{+}$ and $g_{|[r_{0},+\infty[}$ a continuous strictly decreasing
inverse $\Tilde{\phi}$. We thus write 
\[
\{u\ge0|u^{2}-mu^{\alpha}\le h\}=[0,\phi_{+}(h)]\cup[\Tilde{\phi}(h),+\infty).
\]
Hence 
\begin{align*}
 \{x\in B(0,r)\mid&\lVert x-x^{\star}\rVert_{\mathbf{H}}^{2}-m\lVert x-x^{\star}\rVert_{\mathbf{H}}^{\alpha}\leq h\}\\
&=\big(B_{h}\cap B(0,r)\big)\cup\big(B(0,r)\cap V_{h}\big)
\end{align*}

with $V_{h}=\{x\in\mathbb{R}^{d}|\ \lVert x-x^{\star}\rVert_{\mathbf{H}}>\tilde{\phi}(h)\}$.
We now show that for $h$ sufficiently small
\[
\{x\in B(0,r)\mid\lVert x-x^{\star}\rVert_{\mathbf{H}}^{2}-m\lVert x-x^{\star}\rVert_{\mathbf{H}}^{\alpha}\leq h\}=B_{h}.
\]
Indeed, note first that if $x\in B(0,r)$, we obtain by~\eqref{eq:sym-mat-ineq}
\[
\lVert x-x^{\star}\rVert_{\mathbf{H}}^{2}\le e_{d}(\mathbf{H})\lVert x-x^{\star}\rVert^{2}<4e_{d}(\mathbf{H})r^{2}.
\]
where we have used that, as $\lVert x\rVert<r$, the triangle inequality
gives $\lVert x-x^{\star}\rVert<2r$. Hence $B(0,r)\subset\{x\in\mathbb{R}^{d}|\ \lVert x-x^{\star}\rVert_{\mathbf{H}}^{2}<4e_{d}(\mathbf{H})r^{2}\}$.
We now show that $B(0,r)\subset\{x\in\mathbb{R}^{d}|\ \lVert x-x^{\star}\rVert_{\mathbf{H}}\le\Tilde{\phi}(h)\}$.
Indeed, at $h=0$, $0=\phi_{+}(0)<\Tilde{\phi}(0)$ are by definition,
the two roots of 
\[
u^{2}-mu^{\alpha}=0.
\]
Hence $\Tilde{\phi}(0)=\sqrt{e_{d}(\mathbf{H})2r}$. By continuity
of $\Tilde{\phi}(h)$ at $h=0$, we obtain that $B(0,r)\subset\{x\in\mathbb{R}^{d}|\ \lVert x-x^{\star}\rVert_{\mathbf{H}}\le\Tilde{\phi}(h)\}$
for $h$ sufficiently small. As $\phi_{+}(h)\le\Tilde{\phi}(h)$,
we thus obtain that, for $h$ sufficiently small, $V_{h}\cap B(0,r)=\emptyset$.
Next, the same line of reasoning as the one for $\phi_{-}$, using
that $\phi_{+}(h)\rightarrow_{h\rightarrow0}0$ and $\lVert x^{\star}\rVert<r$,
shows that $B_{h}\cap B(0,r)=B_{h}$ for $h$ sufficiently small.
\\
Hence, for $h$ small enough we have
\[
\{x\in B(0,r)\mid\lVert x-x^{\star}\rVert_{\mathbf{H}}^{2}-m\lVert x-x^{\star}\rVert_{\mathbf{H}}^{\alpha}\leq h\}=B_{h}.
\]
This gives $S_h \subset B_h$.\\
Finally, similarly to $\phi_{-}$, we can show that $\phi_{+}(h)=\sqrt{h}+\frac{m}{2}h^{(\alpha-1)/2}+o(h^{(\alpha-1)/2})$,
which concludes the proof of this lemma. 
\end{proof}

\section{Bounds for random search}
\label{sec:randomsearch}
In this section we provide upper bounds and lower bounds for the random search algorithm for functions satisfying Assumption~\ref{ass:principal}. These bounds will also be useful for analyzing the convergence of the $\mu$-best approach.
\subsection{Upper bound}
First, we prove an upper bound for functions satisfying Assumption~\ref{ass:principal}.

\begin{lemma}[Upper bound for random search algorithm]\label{lem:upper1best}
Let $f$ be a function satisfying Assumption~\ref{ass:principal}. There exists a constant $C_0>0$ and an integer $\lambda_0\in \mathbb{N}$ such that for all integers $ \lambda\geq \lambda_0$:
$$\mathbb{E}_{X_1,\cdots,X_\lambda\sim U(B(0,r))}\left[ f\left(X_{(1)}\right)\right] \leq C_0 \lambda^{-\frac2d}\quad. $$
\end{lemma}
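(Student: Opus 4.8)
The plan is to bound the expected minimum fitness through the tail-integral (layer-cake) representation of the expectation, combined with the lower sandwich bound of Lemma~\ref{lem:sandwich}. Write $p(h):=\mathbb{P}(f(X_1)\le h)=\mathrm{vol}(S_h)/\mathrm{vol}(B(0,r))$. Since the $X_i$ are i.i.d.\ uniform on $B(0,r)$, the event $\{f(X_{(1)})>h\}$ is exactly the event that all $\lambda$ samples avoid $S_h$, so
\[
\mathbb{P}\big(f(X_{(1)})>h\big)=\big(1-p(h)\big)^\lambda .
\]
Because $f$ is continuous on the compact set $B(0,r)$ it is bounded, say by $f_{\max}$, so $f(X_{(1)})\in[0,f_{\max}]$ and the layer-cake formula gives $\mathbb{E}[f(X_{(1)})]=\int_0^{f_{\max}}(1-p(h))^\lambda\,dh$.

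The second step lower-bounds $p(h)$ for small $h$. From $f(x)\le L\lVert x-x^\star\rVert^2$ (Lemma~\ref{lem:sandwich}) we get the inclusion $B\big(x^\star,\sqrt{h/L}\big)\subset S_h$, which is valid as long as this ball stays inside $B(0,r)$; since $\lVert x^\star\rVert<r$, this holds for all $h\le h_1$ for some $h_1>0$. Hence, writing $V_d$ for the volume of the unit ball, for $h\le h_1$,
\[
p(h)\ge \frac{\mathrm{vol}\big(B(x^\star,\sqrt{h/L})\big)}{\mathrm{vol}(B(0,r))}=\frac{V_d (h/L)^{d/2}}{V_d r^d}=c\,h^{d/2},\qquad c:=\frac{1}{L^{d/2}r^d}.
\]

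The third step estimates the integral, splitting it at $h_1$. On $[0,h_1]$ I use $1-p\le e^{-p}$ together with the bound above, and then the substitution $h=s\lambda^{-2/d}$ (so that $\lambda c\,h^{d/2}=c\,s^{d/2}$ and $dh=\lambda^{-2/d}ds$), which yields
\[
\int_0^{h_1}\big(1-p(h)\big)^\lambda\,dh\le\int_0^{\infty}e^{-\lambda c\,h^{d/2}}\,dh=\lambda^{-2/d}\int_0^{\infty}e^{-c\,s^{d/2}}\,ds=:K\,\lambda^{-2/d},
\]
where $K<\infty$ since the integrand decays superpolynomially. On the remaining interval $[h_1,f_{\max}]$, monotonicity of $p$ gives $(1-p(h))^\lambda\le(1-c\,h_1^{d/2})^\lambda$, which decays exponentially in $\lambda$ and is therefore $o(\lambda^{-2/d})$.

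Combining the two pieces, $\mathbb{E}[f(X_{(1)})]\le K\,\lambda^{-2/d}+f_{\max}\,(1-c\,h_1^{d/2})^\lambda$; since the second term is negligible compared to $\lambda^{-2/d}$, there is an integer $\lambda_0$ beyond which it is at most $K\,\lambda^{-2/d}$, and the claim follows with $C_0=2K$. I expect no deep difficulty here: the only care needed is the bookkeeping that guarantees the inscribed ball lies inside $B(0,r)$ (ensured by $\lVert x^\star\rVert<r$ for $h$ small) and the verification that the scaling substitution produces exactly the $\lambda^{-2/d}$ rate, with the tail contribution absorbed for large $\lambda$.
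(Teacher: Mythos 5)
Your proof is correct. Structurally it follows the paper's route: the layer-cake representation of the expectation, independence to obtain $\mathbb{P}(f(X_{(1)})>h)=(1-p(h))^\lambda$, Lemma~\ref{lem:sandwich} to inscribe the ball $B(x^\star,\sqrt{h/L})$ in $S_h$, a split of the integral at a level below which that ball fits inside $B(0,r)$, and an exponentially decaying tail absorbed for large $\lambda$. Where you genuinely diverge is the main-term estimate: the paper rescales to the Beta-type integral $\int_0^1(1-u^{d/2})^\lambda\,du$, quotes its closed form $\Gamma(\frac{d+2}{d})\Gamma(\lambda+1)/\Gamma(\lambda+1+2/d)$ from \cite{ppsnkbest}, and invokes Stirling's approximation, whereas you use the elementary inequality $1-x\le e^{-x}$ together with the scaling $h=s\lambda^{-2/d}$ to extract the $\lambda^{-2/d}$ rate directly. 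Your version is more self-contained (no Gamma identities or Stirling asymptotics needed) and loses nothing asymptotically, since $\int_0^\infty e^{-c\,s^{d/2}}\,ds=\Gamma\bigl(1+\tfrac{2}{d}\bigr)c^{-2/d}=\Gamma\bigl(\tfrac{d+2}{d}\bigr)Lr^2$, which is exactly the paper's leading constant. Two small bookkeeping points, both harmless: your $h_1$ can be taken explicitly as $L(r-\lVert x^\star\rVert)^2$, and in the tail bound $c\,h_1^{d/2}=\bigl((r-\lVert x^\star\rVert)/r\bigr)^d\in(0,1]$, so $(1-c\,h_1^{d/2})^\lambda$ indeed decays geometrically (it vanishes when $x^\star=0$, which is fine).
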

\begin{proof}
Let us first recall the following classical property about the expectation of a positive valued random variable:    
\begin{align*}
\mathbb{E}_{X_1,\dots,X_\lambda\sim U(B(0,r))}&\left[ f\left(X_{(1)}\right)\right]= \int_0^\infty \mathbb{P}\left[ f\left(X_{(1)}\right)\geq t\right] dt
\end{align*}
By independence of the samples we have:  
\begin{align*}
\int_0^\infty \mathbb{P}\left[ f\left(X_{(1)}\right)\geq t\right] dt= \int_0^\infty \mathbb{P}_{X\sim U(B(0,r))}\left[ f\left(X\right)\geq t\right]^\lambda dt
\end{align*}
Then thanks to Lemma~\ref{lem:sandwich}:
\begin{align*}
     \int_0^\infty \mathbb{P}_{X\sim U(B(0,r))}&\left[ f\left(X\right)\geq t\right]^\lambda dt\\
 &\leq \int_0^\infty \mathbb{P}_{X\sim U(B(0,r))}\left[ L\lVert X-x^\star\rVert^2 \geq t\right]^\lambda dt\\
  &= \int_0^{L\left(r+\lVert x^\star\rVert\right)^2} \mathbb{P}\left[ \lVert X-x^\star\rVert \geq \sqrt{\frac{t}{L}}\right]^\lambda dt
\end{align*}
where the second equality follows because $\lVert X-x^{\star} \rVert \leq r$ almost surely.
Then, by definition of the uniform law as well as the non-increasing character of $t\mapsto \mathbb{P}_{X\sim U(B(0,r))}\left[ \lVert X-x^\star\rVert \geq \sqrt{\frac{t}{L}}\right]$, we obtain
\begin{align*}
  &\int_0^{L\left(r+\lVert x^\star\rVert\right)^2} \mathbb{P}_{X\sim U(B(0,r))}\left[ \lVert X-x^\star\rVert \geq \sqrt{\frac{t}{L}}\right]^\lambda dt \\
&=\int_0^{L\left(r-\lVert x^\star\rVert\right)^2} \mathbb{P}_{X\sim U(B(0,r))}\left[ \lVert X-x^\star\rVert \geq \sqrt{\frac{t}{L}}\right]^\lambda dt \\
&+ \int_{L\left(r-\lVert x^\star\rVert\right)^2}^{L\left(r+\lVert x^\star\rVert\right)^2} \mathbb{P}_{X\sim U(B(0,r))}\left[ \lVert X-x^\star\rVert \geq \sqrt{\frac{t}{L}}\right]^\lambda dt\\
&\leq \int_0^{L\left(r-\lVert x^\star\rVert\right)^2} \left[ 1-\left(\sqrt{\frac{t}{Lr^2}}\right)^d\right]^\lambda dt\\
 &+L\left(\left(r+\lVert x^\star\rVert\right)^2-\left(r-\lVert x^\star\rVert\right)^2\right) \mathbb{P}\left[ \lVert X-x^\star\rVert \geq r-\lVert x^\star\rVert\right]^\lambda \\
&\leq \int_0^{Lr^2} \left[ 1-\left(\frac{t}{Lr^2}\right)^{\frac{d}{2}}\right]^\lambda dt+4Lr\lVert x^\star\rVert\mathbb{P}\left[ \lVert X-x^\star\rVert \geq r-\lVert x^\star\rVert\right]^\lambda\\
&= Lr^2\int_0^{1} \left[ 1-u^{\frac{d}{2}}\right]^\lambda du+4Lr\lVert x^\star\rVert\mathbb{P}\left[ \lVert X-x^\star\rVert \geq r-\lVert x^\star\rVert\right]^\lambda
\end{align*}
Note that  $\mathbb{P}\left[ \lVert X-x^\star\rVert < r-\lVert x^\star\rVert\right] < 1$. Thus the second term in the last equality satisfies $\mathbb{P}\left[ \lVert X-x^\star\rVert < r-\lVert x^\star\rVert\right]^\lambda
\in o(\lambda^{-2/d})$.
The first term has a closed form given in~\cite{ppsnkbest}:
\begin{align*}
    \int_0^{1} \left[ 1-u^{\frac{d}{2}}\right]^\lambda du=\frac{\Gamma(\frac{d+2}{d})\Gamma(\lambda+1)}{\Gamma(\lambda+1+2/d)}
\end{align*}
Finally thanks to the Stirling approximation, we conclude:
\begin{align*}
\mathbb{E}_{X_1,\dots,X_\lambda\sim U(B(0,r))}&\left[ f\left(X_{(1)}\right)\right]\leq C_1 \lambda^{-2/d}+o(\lambda^{-2/d})
\end{align*}
where $C_1>0$ is a constant independent from $\lambda$.
\end{proof}
This lemma proves that the strategy consisting in returning the best sample (i.e. random search) has an upper rate of convergence of order $\lambda^{-2/d}$, which depends on dimension of the space. It also worth noting this result is common in the literature~\cite{bach,bergstra2012random}

\subsection{Lower bound}
We now give a lower bound for the convergence of the random search algorithm. We also prove a conditional expectation bound that will be useful for the analysis of the $\mu$-best averaging approach.

\begin{lemma}[Lower bound for random search algorithm]\label{lem:lower1best}
Let $f$ be a function satisfying Assumption~\ref{ass:principal}. There exist a constant $C_1>0$ and $\lambda_1\in \mathbb{N}$ such that for all integers $\lambda\geq \lambda_1$, we have the following lower bound for random search:
\begin{align*}
\mathbb{E}_{X_1,\dots,X_\lambda\sim U(B(0,r))}&\left[ f\left(X_{(1)}\right)\right]\geq C_1 \lambda^{-2/d}\quad.
\end{align*}
Moreover, let $(\mu_\lambda)_{\lambda\in\mathbb{N}}$ be a sequence of integers such that $\forall\lambda\geq 2$, $1\leq \mu_\lambda \leq \lambda -1$ and $\mu_\lambda\to\infty$. Then, there exist a constant $C_2>0$ and $\lambda_2\in \mathbb{N}$ such that for all $h\in [0,\max f]$ and $\lambda\geq\lambda_2$, we have the following lower bound when the sampling is conditioned: 
\begin{align*}
   \mathbb{E}_{X_1,\dots,X_\lambda\sim U(B(0,r))}&\left[ f\left(X_{(1)}\right)\mid f(X_{(\mu_{\lambda}+1)})= h\right] \geq C_2 h\mu_\lambda^{-2/d}\quad.
\end{align*}
\end{lemma}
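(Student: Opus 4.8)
The plan is to run, for both statements, the tail-integral identity $\mathbb{E}[Y]=\int_0^\infty\mathbb{P}[Y\geq t]\,dt$ already exploited in Lemma~\ref{lem:upper1best}, but now to \emph{lower}-bound the integrand using the left-hand inequality $l\lVert x-x^\star\rVert^2\leq f(x)$ of Lemma~\ref{lem:sandwich}. For the first bound, independence gives $\mathbb{E}[f(X_{(1)})]=\int_0^\infty\mathbb{P}_{X\sim U(B(0,r))}[f(X)\geq t]^\lambda\,dt$, and from $\{\lVert X-x^\star\rVert\geq\sqrt{t/l}\}\subseteq\{f(X)\geq t\}$ we get $\mathbb{P}[f(X)\geq t]\geq\mathbb{P}[\lVert X-x^\star\rVert\geq\sqrt{t/l}]$. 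Comparing $\mathrm{vol}(B(x^\star,\rho)\cap B(0,r))\leq\mathrm{vol}(B(x^\star,\rho))$ against $\mathrm{vol}(B(0,r))$ yields $\mathbb{P}[\lVert X-x^\star\rVert\geq\rho]\geq 1-(\rho/r)^d$; restricting the integral to $[0,lr^2]$ and setting $u=t/(lr^2)$ reduces it to $lr^2\int_0^1(1-u^{d/2})^\lambda\,du$, whose value $lr^2\,\Gamma(\tfrac{d+2}{d})\Gamma(\lambda+1)/\Gamma(\lambda+1+2/d)$ is, by Stirling, at least $C_1\lambda^{-2/d}$ for $\lambda$ large. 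This is the exact mirror image of Lemma~\ref{lem:upper1best}.

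For the conditional bound the key first step is a reduction to an unconditional problem. Writing $Y_i:=f(X_i)$, these are i.i.d.\ real random variables, and conditioning on $f(X_{(\mu_\lambda+1)})=h$ fixes their $(\mu_\lambda+1)$-th order statistic. By the classical description of order statistics given the value of one of them (the distribution of $Y=f(X)$ being atomless), the $\mu_\lambda$ smaller values are distributed as the order statistics of $\mu_\lambda$ i.i.d.\ copies of $Y$ conditioned on $\{Y\leq h\}$. Since $X$ is uniform on $B(0,r)$, that conditioned law is the law of $f(Z)$ for $Z\sim U(S_h)$, so $\mathbb{E}[f(X_{(1)})\mid f(X_{(\mu_\lambda+1)})=h]=\mathbb{E}_{Z_1,\dots,Z_{\mu_\lambda}\sim U(S_h)}[\min_i f(Z_i)]$. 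The conditional statement thus becomes a best-of-$\mu_\lambda$ estimate on the sublevel set $S_h$, to which I apply the same machinery.

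Running the tail-integral computation on $S_h$ — using $f(Z)\leq h$ a.s.\ to truncate at $h$, the bound $f(Z)\geq l\lVert Z-x^\star\rVert^2$, and $\mathbb{P}_{Z\sim U(S_h)}[\lVert Z-x^\star\rVert<\rho]\leq c_d\rho^d/\mathrm{vol}(S_h)$ with $c_d=\mathrm{vol}(B(0,1))$ — gives $\mathbb{E}[\min_i f(Z_i)]\geq\int_0^h\bigl[1-c_d(t/l)^{d/2}/\mathrm{vol}(S_h)\bigr]_+^{\mu_\lambda}\,dt$. Here the inclusion $S_h\subseteq B(x^\star,\sqrt{h/l})$ (immediate from $f\geq l\lVert\cdot-x^\star\rVert^2$) forces $\mathrm{vol}(S_h)\leq c_d(h/l)^{d/2}$, so the bracket already vanishes at some $t^\star=l(\mathrm{vol}(S_h)/c_d)^{2/d}\leq h$ and the integral stays within the legitimate range. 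The substitution $w=c_d(t/l)^{d/2}/\mathrm{vol}(S_h)$ turns it into $t^\star\,\tfrac{2}{d}\,\Gamma(2/d)\Gamma(\mu_\lambda+1)/\Gamma(\mu_\lambda+1+2/d)$, and Stirling bounds the Gamma ratio below by $c\,\mu_\lambda^{-2/d}$ once $\mu_\lambda$ is large, so $\mathbb{E}[\min_i f(Z_i)]\gtrsim l(\mathrm{vol}(S_h)/c_d)^{2/d}\mu_\lambda^{-2/d}$.

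It remains to turn the $\mathrm{vol}(S_h)$ factor into $h$, i.e.\ to prove $\mathrm{vol}(S_h)\geq C'h^{d/2}$ \emph{uniformly} over $h\in[0,\max f]$; this is the step I expect to be the main obstacle, because the handy local estimate is only available for small $h$. I would split the range. For $h\leq h_1:=L(r-\lVert x^\star\rVert)^2$, the right-hand inequality of Lemma~\ref{lem:sandwich} gives $B(x^\star,\sqrt{h/L})\subseteq S_h$ with that ball contained in $B(0,r)$, so $\mathrm{vol}(S_h)\geq c_d(h/L)^{d/2}$. For $h\geq h_1$, monotonicity of $h\mapsto\mathrm{vol}(S_h)$ gives $\mathrm{vol}(S_h)\geq\mathrm{vol}(S_{h_1})>0$ while $h^{d/2}\leq(\max f)^{d/2}$ stays bounded, so the ratio $\mathrm{vol}(S_h)/h^{d/2}$ is bounded below; taking the smaller of the two constants gives $C'$. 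Substituting $\mathrm{vol}(S_h)\geq C'h^{d/2}$ into the previous estimate yields $\mathbb{E}[\min_i f(Z_i)]\geq C_2 h\,\mu_\lambda^{-2/d}$ with a constant $C_2>0$ independent of $h$, valid for every $\lambda\geq\lambda_2$ chosen so that $\mu_\lambda$ exceeds the Stirling threshold; the degenerate case $h=0$ is trivial since both sides vanish.
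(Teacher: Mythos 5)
Your proof is correct, and while your unconditional half is essentially the paper's argument, your conditional half takes a genuinely different route. For the first bound, the paper also uses the tail integral, the lower sandwich $f(x)\ge l\lVert x-x^\star\rVert^2$ of Lemma~\ref{lem:sandwich}, the reduction to $lr^2\int_0^1(1-u^{d/2})^\lambda\,du$, the closed form $\Gamma(\tfrac{d+2}{d})\Gamma(\lambda+1)/\Gamma(\lambda+1+2/d)$ and Stirling; your global volume bound $\mathbb{P}\left[\lVert X-x^\star\rVert\ge\rho\right]\ge 1-(\rho/r)^d$ even spares you the paper's splitting of the integral at $l(r-\lVert x^\star\rVert)^2$ and its $o(\lambda^{-2/d})$ error term. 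For the conditional bound, both you and the paper first reduce, via the order-statistics fact of Remark~\ref{rk:samples}, to a best-of-$\mu_\lambda$ problem for i.i.d.\ samples on $S_h$ (your atomlessness caveat for $f(X)$ is not actually guaranteed by Assumption~\ref{ass:principal}, which allows plateaus away from the optimum, but the paper's Remark~\ref{rk:samples} asserts the same equivalence without further justification, so you are on equal footing there). The divergence comes next: the paper replaces $U(S_h)$ by $U(B(x^\star,\sqrt{h/l}))$ in one line, asserting the comparison of tail probabilities ``follows from the inclusion $S_h\subset B(x^\star,\sqrt{h/l})$''; read literally as a pointwise domination, this does not follow from inclusion alone, since the uniform law on a \emph{smaller} subset can have a \emph{smaller} radial tail (for strongly anisotropic $\mathbf{H}$, $S_h$ is a thin ellipsoid whose relative mass beyond radii $\rho$ close to $\sqrt{h/l}$ is smaller than that of the circumscribed ball). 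You sidestep this by keeping $\mathrm{vol}(S_h)$ explicit, bounding $\mathbb{P}_{Z\sim U(S_h)}\left[\lVert Z-x^\star\rVert<\rho\right]\le c_d\rho^d/\mathrm{vol}(S_h)$, and then supplying the uniform estimate $\mathrm{vol}(S_h)\ge C'h^{d/2}$ on all of $[0,\max f]$ from the \emph{other} side of Lemma~\ref{lem:sandwich} (the inscribed ball $B(x^\star,\sqrt{h/L})\subset S_h$ for $h\le L(r-\lVert x^\star\rVert)^2$, plus monotonicity of $h\mapsto\mathrm{vol}(S_h)$ for the remaining range); your Beta-integral bookkeeping ($t^\star\le h$ from $\mathrm{vol}(S_h)\le c_d(h/l)^{d/2}$, and $\tfrac2d\Gamma(2/d)=\Gamma(\tfrac{d+2}{d})$, matching the paper's constant) is sound. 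The trade-off: the paper's route is shorter and yields the clean constant $\tfrac12\Gamma(\tfrac{d+2}{d})$, whereas yours degrades the constant by a factor of order $l/L$ — but it is airtight where the paper's stochastic-domination step is, at best, under-justified, so your detour through the volume lower bound is exactly the right repair.
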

\begin{proof}
The proof is very similar to the previous one. Let us first show the unconditional inequality. We use the identity for the expectation of a positive random variable
\begin{align*}
\mathbb{E}&_{X_1,\dots,X_\lambda\sim  U(B(0,r))}\left[ f\left(X_{(1)}\right)\right] \\
&= \int_0^\infty \mathbb{P}_{X_1,\dots,X_\lambda\sim  U(B(0,r))}\left[ f\left(X_{(1)}\right)\geq t\right] dt
\end{align*}
Since the samples are independent, we have
\begin{align*}
 \int_0^\infty &\mathbb{P}_{X_1,\dots,X_\lambda\sim U(B(0,r))}\left[ f\left(X_{(1)}\right)\geq t \right] dt\\ 
 & = \int_0^\infty \mathbb{P}_{X\sim  U(B(0,r))}\left[ f\left(X\right)\geq t\right]^\lambda dt
 \end{align*}
 Using Lemma~\ref{lem:sandwich}, we get:
 \begin{align*}
 \int_0^\infty& \mathbb{P}_{X\sim U(B(0,r))}\left[ f\left(X\right)\geq t\right]^\lambda dt\\
& \geq \int_0^\infty \mathbb{P}_{X\sim  U(B(0,r))}\left[ l\lVert X-x^\star\rVert^2 \geq t\right]^\lambda dt\\
& \geq \int_0^{l(r-\lVert x^\star\rVert)^2} \mathbb{P}_{X\sim U(B(0,r))}\left[ l\lVert X-x^\star\rVert^2 \geq t\right]^\lambda dt\\
&=\int_0^{l(r-\lVert x^\star\rVert)^2} \left[ 1-\left(\sqrt{\frac{t}{lr^2}}\right)^d\right]^\lambda dt
\end{align*}
We can decompose the integral to obtain:
\begin{align*}
    &\int_0^{l(r-\lVert x^\star\rVert)^2} \left[ 1-\left(\sqrt{\frac{t}{lr^2}}\right)^d\right]^\lambda dt\\
    &=\int_0^{lr^2} \left[ 1-\left(\sqrt{\frac{t}{lr^2}}\right)^d\right]^\lambda - \int_{l(r-\lVert x^\star\rVert)^2}^{lr^2} \left[ 1-\left(\sqrt{\frac{t}{lr^2}}\right)^d\right]^\lambda dt\\
    &\geq lr^2\frac{\Gamma(\frac{d+2}{d})\Gamma(\lambda+1)}{\Gamma(\lambda+1+\frac2d)}- l(r^2-(r-\lVert x^\star\rVert)^2)\left[ 1-\left(\frac{r-\lVert x^\star\rVert}{r}\right)^{d}\right]^\lambda\\
    &\geq \frac12 lr^2 \Gamma(\frac{d+2}{d}) \lambda^{-2/d}\text{ for $\lambda$ sufficiently large.}
\end{align*}
where the last inequality follows by Stirling's approximation applied to the first term and because the second term is $o(\lambda^{-2/d})$ as in previous proof.\\
This concludes the proof of the first part of the lemma. Let us now treat the case of the conditional inequality. Using the same first identity as above we have
\begin{align*}
\mathbb{E}&_{X_1,\dots,X_\lambda\sim U(B(0,r))}\left[ f\left(X_{(1)}\right)\mid f(X_{(\mu_{\lambda}+1)}) = h\right] \\
&= \int_0^\infty \mathbb{P}_{X_1,\dots,X_\lambda\sim U(B(0,r))}\left[ f\left(X_{(1)}\right)\geq t \mid f(X_{({\mu_{\lambda}}+1)}) = h\right] dt
\end{align*}
%
\begin{rmq}
\label{rk:samples}
Note that if we sample $\lambda$ independent variables $X_1 \ldots X_\lambda$ while conditioning on $f(X_{(\mu+1)})=h$ 
and keep only the $\mu$-best variables $X_i$ such that $f(X_i)\le h$, this is exactly equivalent to sampling directly $X_1 \ldots X_\mu$ from the $h$-level set. This result was justified and used in~\cite{ppsnkbest} in their proofs.
\end{rmq}
Hence we obtain
\begin{align*}
 \int_0^\infty &\mathbb{P}_{X_1,\dots,X_\lambda\sim U(B(0,r))}\left[ f\left(X_{(1)}\right)\geq t \mid f(X_{(\mu_{\lambda}+1)})= h\right] dt\\ 
 & = \int_0^\infty \mathbb{P}_{X\sim U(S_h)}\left[ f\left(X\right)\geq t\right]^{\mu_{\lambda}} dt
 \end{align*}
 Using Lemma~\ref{lem:sandwich}, we get:
 \begin{align*}
 \int_0^\infty& \mathbb{P}_{X\sim U(S_h)}\left[ f\left(X\right)\geq t\right]^{\mu_{\lambda}} dt\\
& \geq \int_0^\infty \mathbb{P}_{X\sim U(S_h)}\left[ l\lVert X-x^\star\rVert^2 \geq t\right]^{\mu_{\lambda}} dt\\
& \geq \int_0^\infty \mathbb{P}_{X\sim U(B(x^\star,\sqrt{\frac{h}{l}}))}\left[ l\lVert X-x^\star\rVert^2 \geq t\right]^{\mu_{\lambda}} dt
\end{align*}
where the last inequality follows from the inclusion $S_h\subset B(x^\star,\sqrt{\frac{h}{l}})$, which is also a consequence of Lemma~\ref{lem:sandwich}. We then get
\begin{align*}
\int_0^\infty& \mathbb{P}_{X\sim U(B(x^\star,\sqrt{\frac{h}{l}}))}\left[ l\lVert X-x^\star\rVert^2 \geq t\right]^{\mu_{\lambda}} dt\\
& = \int_0^h \mathbb{P}_{X\sim U(B(x^\star,\sqrt{\frac{h}{l}}))}\left[ l\lVert X-x^\star\rVert^2 \geq t\right]^{\mu_{\lambda}} dt\\
& = \int_0^{h} \left[ 1-\left(\sqrt{\frac{t}{h}}\right)^d\right]^{\mu_{\lambda}} dt\\
& = h\frac{\Gamma(\frac{d+2}{d})\Gamma(\mu_\lambda+1)}{\Gamma(\mu_\lambda+1+2/d)}\\
& \geq \frac12 h \Gamma(\frac{d+2}{d})\mu_\lambda^{-2/d}\text{ for $\lambda$ sufficiently large.}
\end{align*}
\end{proof}
This lemma, along with Lemma~\ref{lem:upper1best}, proves that for any function satisfying Assumption~\ref{ass:principal}, its rate of convergence is exponentially dependent on the dimension and of order $\lambda^{-2/d}$ where $\lambda$ is the number of points sampled to estimate the optimum. 
\begin{rmq}[Convergence of the distance to the optimum]
It is worth noting that, thanks to Lemma~\ref{lem:sandwich}, the convergence rates are also valid for the square distance to the optimum $x^\star$. 
\end{rmq}

\section{Convergence rates for the $\mu$-best averaging approach}
\label{sec:mubestrate}
In the next section we focus on the case where we average the $\mu$ best samples among the $\lambda$ samples.  We first prove a lemma when the sampling is conditional on the $(\mu+1)$-th value.
\begin{lemma} 
\label{lem:condit-upper} 
Let $f$ be a function satisfying Assumption~\ref{ass:principal}. There exists a constant $C_3>0$ such that for all $h\in[0,\max f]$ and $\lambda$ and $\mu$ two integers such that $1\leq \mu \leq \lambda-1$, we have the following conditional upper bound:
\[
\mathbb{E}_{X_{1},...X_{\lambda}\sim U(B(0,r))}\left[f(\bar{X}_{(\mu)})|f(X_{(\mu+1)})=h\right]\le C_3\left(\frac{h}{\mu}+h^{\alpha-1}\right).
\]

\end{lemma}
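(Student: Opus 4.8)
The plan is to first remove the conditioning. By Remark~\ref{rk:samples}, conditioning on $f(X_{(\mu+1)})=h$ and keeping the $\mu$ best points amounts to drawing $X_1,\dots,X_\mu$ independently from $U(S_h)$, so it suffices to bound $\mathbb{E}_{X_1,\dots,X_\mu\sim U(S_h)}[f(\bar{X}_{(\mu)})]$. Writing $Y_i=X_i-x^\star$ and $\bar{Y}=\bar{X}_{(\mu)}-x^\star$, Assumption~\ref{ass:principal} together with $\lvert\varepsilon\rvert\le M$ yields the pointwise bound $f(\bar{X}_{(\mu)})\le\lVert\bar{Y}\rVert_{\mathbf H}^2+M\lVert\bar{Y}\rVert_{\mathbf H}^{\alpha}$, reducing everything to the second moment $\mathbb{E}[\lVert\bar{Y}\rVert_{\mathbf H}^2]$ and the $\alpha$-th moment $\mathbb{E}[\lVert\bar{Y}\rVert_{\mathbf H}^{\alpha}]$ of the recentred average. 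I would split on the value of $h$, taking $h_0$ as in Lemma~\ref{lemma:sandwich-set} (shrunk so that $h_0\le 1$). For $h>h_0$ the bound is immediate: $\bar{X}_{(\mu)}\in B(0,r)$ by convexity, so $f(\bar{X}_{(\mu)})\le\max f$, while $h^{\alpha-1}\ge h_0^{\alpha-1}$, and any $C_3\ge\max f/h_0^{\alpha-1}$ works. The real content is the regime $h\le h_0$.

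For $h\le h_0$ I use a bias--variance decomposition. Since the $Y_i$ are i.i.d.\ with mean $\mathbf m:=\mathbb{E}_{X\sim U(S_h)}[X-x^\star]$, independence and $\mathbb{E}[Y_i-\mathbf m]=0$ give $\mathbb{E}[\lVert\bar{Y}\rVert_{\mathbf H}^2]=\lVert\mathbf m\rVert_{\mathbf H}^2+\frac1\mu\mathbb{E}[\lVert Y_1-\mathbf m\rVert_{\mathbf H}^2]\le\lVert\mathbf m\rVert_{\mathbf H}^2+\frac1\mu\mathbb{E}[\lVert Y_1\rVert_{\mathbf H}^2]$. The variance term is controlled deterministically: every $x\in S_h$ satisfies $l\lVert x-x^\star\rVert^2\le f(x)\le h$ by Lemma~\ref{lem:sandwich}, so $\lVert Y_1\rVert_{\mathbf H}^2\le e_d(\mathbf H)\lVert Y_1\rVert^2\le e_d(\mathbf H)h/l$, giving $\frac1\mu\mathbb{E}[\lVert Y_1\rVert_{\mathbf H}^2]\le Ch/\mu$. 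The bias term $\lVert\mathbf m\rVert_{\mathbf H}^2$ is where the improvement over random search lives, and estimating it is the main obstacle; I expect to prove $\lVert\mathbf m\rVert_{\mathbf H}^2\le Ch^{\alpha-1}$.

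To bound the bias I exploit the sandwich $A_h\subset S_h\subset B_h$ of Lemma~\ref{lemma:sandwich-set}, in which $A_h$ and $B_h$ are $\mathbf H$-balls centred at $x^\star$ of radii $\phi_-(h)\le\phi_+(h)$. As $A_h$ is symmetric under reflection through $x^\star$, $\int_{A_h}(x-x^\star)\,dx=0$, hence $\int_{S_h}(x-x^\star)\,dx=\int_{S_h\setminus A_h}(x-x^\star)\,dx$ and $\lVert\mathbf m\rVert\le\frac{\mathrm{vol}(B_h\setminus A_h)}{\mathrm{vol}(A_h)}\sup_{x\in B_h}\lVert x-x^\star\rVert$. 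Since the $\mathbf H$-ball of radius $\rho$ has volume proportional to $\rho^d$, the volume ratio equals $(\phi_+(h)/\phi_-(h))^d-1$; plugging in $\phi_\pm(h)=\sqrt h\pm\Theta(h^{(\alpha-1)/2})$ gives $(\phi_+/\phi_-)^d-1=\Theta(h^{(\alpha-2)/2})$, while $\sup_{x\in B_h}\lVert x-x^\star\rVert\le e_1(\mathbf H)^{-1/2}\phi_+(h)=O(\sqrt h)$. Multiplying yields $\lVert\mathbf m\rVert=O(h^{(\alpha-1)/2})$, so $\lVert\mathbf m\rVert_{\mathbf H}^2\le Ch^{\alpha-1}$ and thus $\mathbb{E}[\lVert\bar{Y}\rVert_{\mathbf H}^2]\le C(\frac h\mu+h^{\alpha-1})$.

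For the $\alpha$-th moment I reuse the deterministic bound $\lVert\bar{Y}\rVert_{\mathbf H}\le(e_d(\mathbf H)/l)^{1/2}\sqrt h$, which follows from the triangle inequality and $\lVert Y_i\rVert_{\mathbf H}\le(e_d(\mathbf H)/l)^{1/2}\sqrt h$. Factoring $\lVert\bar{Y}\rVert_{\mathbf H}^{\alpha}=\lVert\bar{Y}\rVert_{\mathbf H}^{\alpha-2}\lVert\bar{Y}\rVert_{\mathbf H}^2\le Ch^{(\alpha-2)/2}\lVert\bar{Y}\rVert_{\mathbf H}^2$ and taking expectations gives $\mathbb{E}[\lVert\bar{Y}\rVert_{\mathbf H}^{\alpha}]\le Ch^{(\alpha-2)/2}(\frac h\mu+h^{\alpha-1})$; since $\alpha>2$ and $h\le h_0\le1$, the exponents $\frac\alpha2$ and $\frac{3\alpha-4}2$ that appear exceed $1$ and $\alpha-1$ respectively, so this term is again $O(\frac h\mu+h^{\alpha-1})$. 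Combining the pointwise inequality with the two moment estimates settles the regime $h\le h_0$, and taking $C_3$ to be the larger of the constants from the two regimes completes the proof.
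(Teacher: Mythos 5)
Your proposal is correct and follows essentially the same route as the paper's proof: the reduction via Remark~\ref{rk:samples}, the bias--variance decomposition of $\mathbb{E}\lVert\bar X_{(\mu)}-x^\star\rVert_{\mathbf H}^2$, the cancellation of the bias over the symmetric inner ellipsoid $A_h$ together with the volume asymptotics of $B_h\setminus A_h$ yielding the $h^{\alpha-1}$ bias bound, and the reduction of the $\alpha$-th moment to the second moment for small $h$. Your minor variations---normalizing the bias by $\mathrm{vol}(A_h)$ rather than using the asymptotics of $\mathrm{vol}(S_h)$, the deterministic variance bound via Lemma~\ref{lem:sandwich} valid for all $h$, and the explicit split at $h_0$ with the factorization $\lVert\bar X_{(\mu)}-x^\star\rVert_{\mathbf H}^{\alpha}\le Ch^{(\alpha-2)/2}\lVert\bar X_{(\mu)}-x^\star\rVert_{\mathbf H}^{2}$---are cosmetic and all check out.
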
 
\begin{proof}
We first decompose the expectation as follows.
\begin{align}
\mathbb{E}&_{X_{1},...X_{\lambda}\sim U(B(0,r))}  \left[f(\bar{X}_{(\mu)})|f(X_{(\mu+1)}))=h\right]\nonumber\\
 & =\mathbb{E}_{X_{1},...X_{\mu}\sim U(S_{h})}\left[f(\bar{X}_{\mu})\right]\nonumber\\
 & =\mathbb{E}_{X_{1},\cdots,X_{\mu}\sim U(S_{h})}\left[\lVert \bar{X}_{\mu}-x^\star\rVert_{\mathbf{H}}^2\right]\label{eq:first-term}\\
 &+\mathbb{E}_{X_{1},\cdots,X_{\mu}\sim U(S_{h})}\left[\lVert\bar{X}_{\mu}-x^\star\rVert^{\alpha}_{\mathbf{H}}\varepsilon(\bar{X}_{\mu}-x^\star)\right]\label{eq:snd-term}
\end{align}
where we have use the same argument as in Remark~\ref{rk:samples} in the first equality.
We will treat the terms~\eqref{eq:first-term} and~\eqref{eq:snd-term} independently. We first look at~\eqref{eq:first-term}. We have the following ``bias-variance'' decomposition.
\begin{align*}
\mathbb{E}_{X_{1},\cdots,X_{\mu}\sim U(S_{h})}\lVert\bar{X}_{\mu}-x^{\star}\rVert^{2}_{\mathbf{H}} 
=&(1-\frac{1}{\mu})\lVert\mathbb{E}_{X\sim U(S_{h})}X-x^{\star}\rVert^{2}_{\mathbf{H}}\\
&+\frac{1}{\mu}\mathbb{E}_{X\sim U(S_{h})}\lVert X-x^{\star}\rVert^{2}_{\mathbf{H}}
\end{align*}
We will use Lemma~\ref{lemma:sandwich-set}. We have $A_h\subset S_h\subset B_h$. Hence for the variance term
\[
\frac{1}{\mu}\mathbb{E}_{X\sim U(S_{h})}\lVert X-x^{\star}\rVert^{2}_{\mathbf{H}}\leq\frac{1}{\mu}\mathbb{E}_{X\sim U(S_{h})} \phi_+(h)^{2} \leq \frac{\phi_+(h)^{2}}{\mu}\sim_{0} \frac{h}{\mu}.
\]
where $\sim_0$ means ''is equivalent to $\dots$ when $h\to0$, in other words, $u(h)\sim_0 v(h)$ iff $\frac{u(h)}{v(h)}\to 0$ as $h\to0$. For the bias term, recall that \[\mathbb{E}_{X\sim U(S_{h})}\left[X-x^\star\right] = \frac{1}{\mathrm{vol}(S_h)}\int_{S_h} (x-x^\star) dx.\] We then have by inclusion of sets
\begin{align*}
    \mathrm{vol}(A_h)\leq \mathrm{vol}(S_h)\leq \mathrm{vol}(B_h)
\end{align*}
Note that the volume of the $d$-dimensional ellipsoid $B_h$ satisfies $\mathrm{vol}(B_h)=\phi_+(h)^{d}\frac{\omega_{d}}{\mathrm{det}(\mathbf{H})}$ with $\omega_{d}=\mathrm{vol}(B(0,1))$ and similarly for $A_h$.
From this we deduce by the squeeze theorem that 
\[\mathrm{vol}(S_h)\sim \frac{\omega_dh^{d/2}}{\mathrm{det}(\mathbf{H})}.\]We now decompose the integral 
\begin{align*}
\int_{S_{h}}(x-x^\star)dx & =\int_{A_{h}}(x-x^\star)dx+\int_{S_{h}\setminus A_{h}}(x-x^\star)dx\\
 & =\int_{S_{h}\setminus A_{h}}(x-x^\star)dx
\end{align*}
(because $A_{h}$ is an ellipsoid centered at $x^\star$ hence the integral of $x-x^\star$
over it is $0$). We then upper-bound using the triangle inequality for the $\mathbf{H}-$norm:
\begin{align*}
\lVert\int_{{S}_{h}\setminus A_{h}}(x-x^\star)dx\rVert_{\mathbf{H}} &\leq \int_{{S}_{h}\setminus A_{h}}\lVert x-x^\star\rVert_{\mathbf{H}} dx\\
& \le \phi_+(h)\mathrm{vol}({S}_{h}\setminus A_{h})\\
 & = \phi_+(h)(\mathrm{vol}({S}_{h})-\mathrm{vol}(A_{h}))\\
 &\leq \phi_+(h)(\mathrm{vol}(B_h)-\mathrm{vol}(A_h))\\
 & \sim d\frac{\omega_d}{\mathrm{det}(\mathbf{H})}\frac{m+M}{2}h^{d/2}h^{(\alpha-1)/2}
\end{align*}
For the last equivalent, we used a Taylor expansion for the volume of $A_h$ and $B_h$. We conclude that there exist $h_1>0$ and a constant $C>0$ not depending on $\lambda$ and $\mu$ such that for $h\leq h_1$,
\begin{align*}
\lVert\mathbb{E}_{X\sim U(S_{h})}\left[X\right]-x^\star\rVert^{2}_{\mathbf{H}} \leq C h^{\alpha-1}
\end{align*}
Since $h$ is upper bounded by $\max f$, the previous inequality can be extended to $h\in[0,\max f]$, with a possibly larger constant still not depending on $\lambda$ and $\mu$. Let us now upper bound the remainder term~\eqref{eq:snd-term}.
As $\varepsilon \leq M$ by assumption, we can write
\begin{align*}
\mathbb{E}_{X_{1},\cdots,X_{\mu}\sim U(S_{h})}&\left[\lVert\bar{X}_{\mu}-x^\star\rVert^{\alpha}_{\mathbf{H}}\varepsilon(\bar{X}_{\mu}-x^\star)\right]\\
&\le M\mathbb{E}_{X_{1},\cdots,X_{\mu}\sim U(S_{h})}\left[\lVert\bar{X}_{\mu}-x^\star\rVert^{\alpha}_{\mathbf{H}}\right]
\end{align*}

We have $X_{1},\cdots,X_{\mu}\in S_{h}\subset B_{h}$
hence by the convexity of $B_h$ (which is a ball for the $\mathbf{H}$-norm) we also have $\bar{X}_{\mu}\in B_{h}$ and
thus, for $h$ sufficiently small, we have:
\[
\lVert\bar{X}_{\mu}-x^\star\rVert_{\mathbf{H}}\le \phi_+(h).
\]
Note that $\phi_+(h)\sim_{0} \sqrt{h} $ thus, for $h$ sufficiently small, $ \lVert\bar{X}_{\mu}-x^\star\rVert_{\mathbf{H}}\le 1$
almost surely, hence, as $\alpha > 2$
\begin{align*}
\lVert\bar{X}_{\mu}-x^\star\rVert^{\alpha}_{\mathbf{H}}\leq \lVert\bar{X}_{\mu}-x^\star\rVert^{2}_{\mathbf{H}}
\end{align*}
almost surely. Since $h$ is upper bounded, we have the existence of a constant $C'>0$ not depending on $\lambda$ and $\mu$, such that for all $h\in [0,\max f]$,
\begin{align*}
\lVert\bar{X}_{\mu}-x^\star\rVert^{\alpha}_{\mathbf{H}}\leq C'\lVert\bar{X}_{\mu}-x^\star\rVert^{2}_{\mathbf{H}}
\end{align*}
Thus we can upper bound the remainder with the same bounds as the one for the main term (up to constants), for any $h\in[0,\max f]$.
We now group the ``main'' term and remainder term to get the existence of a constant $C_3>0$ not depending on $\lambda$ and $\mu$ such that for all $h\in [0,\max f]$,
\[
\mathbb{E}_{X_{1},...X_{\lambda}\sim U(B(0,r))}\left[f(\bar{X}_{(\mu)})|f(X_{(\mu+1)})=h\right]\le C_3\left(\frac{h}{\mu}+h^{\alpha-1}\right)\quad.
\]

\end{proof}


We are now set to prove our main result, which is an upper convergence rate for the $\mu$-best approach. This is the main result of the paper.
\begin{thm}\label{thm:principal}
Let $f$ be a function satisfying Assumption~\ref{ass:principal}.  Let $(\mu_\lambda)_{\lambda\in\mathbb{N}}$ be a sequence of integers such that $\forall\lambda\geq 2$, $1\leq \mu_\lambda \leq \lambda -1$ and $\mu_\lambda\to\infty$. Then, there exist two constants $C,C'>0$ and $\Tilde{\lambda}\in \mathbb{N}$ such that  for $\lambda\geq\Tilde{\lambda}$, we have the upper bound: 
\begin{align*}
 \mathbb{E}_{X_{1},\dots,X_{\lambda}\sim U(B(0,r))}\left[f(\bar{X}_{(\mu_\lambda)})\right] \leq C\frac{\mu_\lambda^{\frac{2(\alpha-1)}{d}}}{\lambda^{\frac{2(\alpha-1)}{d}}}+C'\frac{\mu_\lambda^{\frac{2}{d}-1}}{\lambda^{\frac{2}{d}}}\quad.
\end{align*}
In particular if $\mu_{\lambda}\sim C^{''}\lambda^{\frac{2(\alpha-2)}{d+2(\alpha-2)}}$
for some $C^{''}>0$, we obtain:
\begin{align*}
\mathbb{E}_{X_{1},\dots,X_{\lambda}\sim U(B(0,r))}\left[f(\bar{X}_{(\mu_{\lambda})})\right] & \le C^{'''}\lambda^{-\frac{2(\alpha-1)}{d+2(\alpha-2)}}
\end{align*}
for some $C'''>0$ independent of $\lambda$.\end{thm}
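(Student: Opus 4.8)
The plan is to integrate the conditional bound of Lemma~\ref{lem:condit-upper} against the law of the threshold value $H := f(X_{(\mu_\lambda+1)})$. By the tower property,
\[
\mathbb{E}\left[f(\bar{X}_{(\mu_\lambda)})\right] = \mathbb{E}_H\left[\mathbb{E}\left[f(\bar{X}_{(\mu_\lambda)}) \mid f(X_{(\mu_\lambda+1)}) = H\right]\right],
\]
and since $H\in[0,\max f]$ almost surely, Lemma~\ref{lem:condit-upper} applied pointwise in $H$ gives
\[
\mathbb{E}\left[f(\bar{X}_{(\mu_\lambda)})\right] \le C_3\left(\frac{\mathbb{E}[H]}{\mu_\lambda} + \mathbb{E}\left[H^{\alpha-1}\right]\right).
\]
It then suffices to establish the single moment estimate $\mathbb{E}[H^k] \le C_k\,(\mu_\lambda/\lambda)^{2k/d}$ for the two exponents $k=1$ and $k=\alpha-1$: substituting these reproduces exactly the two terms $\mu_\lambda^{2/d-1}\lambda^{-2/d}$ and $\mu_\lambda^{2(\alpha-1)/d}\lambda^{-2(\alpha-1)/d}$ of the claimed bound.

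The crux, and where I expect the real work to lie, is controlling the moments of the order statistic $H=f(X_{(\mu_\lambda+1)})$. I would start from the upper sandwich of Lemma~\ref{lem:sandwich}, $f(X)\le L\lVert X-x^\star\rVert^2$, to lower bound the single-sample tail $p(s):=\mathbb{P}_{X\sim U(B(0,r))}[f(X)\le s]$: for $s$ below a fixed threshold $s_0$ the ball $B(x^\star,\sqrt{s/L})$ lies inside $B(0,r)$, whence $p(s)\ge (s/(Lr^2))^{d/2}=:q(s)$. Writing $N_s:=\#\{i:f(X_i)\le s\}\sim\mathrm{Bin}(\lambda,p(s))$ and using $\{H>s\}=\{N_s\le\mu_\lambda\}$, the layer-cake formula $\mathbb{E}[H^k]=\int_0^{\max f} k s^{k-1}\,\mathbb{P}[H>s]\,ds$ splits at $s_0$. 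On $[0,s_0]$ the bound $p\ge q$ makes $N_s$ stochastically larger, so $\mathbb{P}[H>s]\le\mathbb{P}[\mathrm{Bin}(\lambda,q(s))\le\mu_\lambda]$, and extending this truncated integral to $+\infty$ identifies it with $(Lr^2)^k\,\mathbb{E}[U_{(\mu_\lambda+1)}^{2k/d}]$, where $U_{(\mu_\lambda+1)}$ is the $(\mu_\lambda+1)$-th order statistic of $\lambda$ i.i.d.\ uniforms on $[0,1]$, i.e.\ a $\mathrm{Beta}(\mu_\lambda+1,\lambda-\mu_\lambda)$ variable. The exact moment $\mathbb{E}[U_{(\mu_\lambda+1)}^{s}]=\frac{\Gamma(\mu_\lambda+1+s)\,\Gamma(\lambda+1)}{\Gamma(\mu_\lambda+1)\,\Gamma(\lambda+1+s)}$ at $s=2k/d$, combined with the Stirling asymptotics $\Gamma(n+s)/\Gamma(n)\sim n^{s}$ already used in Lemmas~\ref{lem:upper1best} and~\ref{lem:lower1best}, yields the desired $(\mu_\lambda/\lambda)^{2k/d}$ scaling. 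The remaining range $s>s_0$ is handled by boundedness of $f$ (so $H\le\max f$) together with $\mathbb{P}[H>s_0]=\mathbb{P}[N_{s_0}\le\mu_\lambda]$, which is either exponentially small by a Chernoff bound (when $\mu_\lambda/\lambda\le p(s_0)/2$) or otherwise makes $(\mu_\lambda/\lambda)^{2k/d}$ bounded below by a constant; in both cases this tail is $O((\mu_\lambda/\lambda)^{2k/d})$. The main obstacle is precisely keeping the tail of the same order as the small-$s$ geometry for every admissible $\mu_\lambda\le\lambda-1$, not only when $\mu_\lambda/\lambda\to0$.

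For the ``in particular'' claim I would substitute $\mu_\lambda\sim C''\lambda^{\beta}$ into the two-term bound and balance exponents: the first term scales as $\lambda^{\frac{2(\alpha-1)}{d}(\beta-1)}$ and the second as $\lambda^{\frac{2}{d}(\beta-1)-\beta}$, so equating them gives $\frac{2(\alpha-2)}{d}(\beta-1)=-\beta$, whose solution is $\beta=\frac{2(\alpha-2)}{d+2(\alpha-2)}$, exactly the stated selection rate (and $0<\beta<1$ since $\alpha>2$, so the constraints $\mu_\lambda\to\infty$ and $\mu_\lambda\le\lambda-1$ hold for large $\lambda$). Plugging $\beta-1=-d/(d+2(\alpha-2))$ back into either exponent yields the common rate $\lambda^{-2(\alpha-1)/(d+2(\alpha-2))}$, which is the asserted $C'''$ bound. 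This final step is routine algebra once the two-term estimate is established.
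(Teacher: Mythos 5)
Your proof is correct, and after the shared first step---conditioning on $H:=f(X_{(\mu_\lambda+1)})$ and integrating the bound of Lemma~\ref{lem:condit-upper}---it takes a genuinely different route from the paper's. The paper bounds the two moments $\mathbb{E}[H]$ and $\mathbb{E}[H^{\alpha-1}]$ \emph{indirectly}: it inverts the conditional lower bound of Lemma~\ref{lem:lower1best}, $\mathbb{E}\left[f(X_{(1)})\mid H=h\right]\ge C_2\,h\,\mu_\lambda^{-2/d}$, to write $h\le C_2^{-1}\mu_\lambda^{2/d}\,\mathbb{E}\left[f(X_{(1)})\mid H=h\right]$, takes expectations (using Jensen's inequality for the conditional expectation in the $(\alpha-1)$-power case), and then invokes Lemma~\ref{lem:upper1best} together with its $(\alpha-1)$-power analog $\mathbb{E}\left[f(X_{(1)})^{\alpha-1}\right]\le C_3'\lambda^{-2(\alpha-1)/d}$, obtained by rerunning that proof with $\lVert X-x^\star\rVert^{2(\alpha-1)}$ in place of $\lVert X-x^\star\rVert^{2}$. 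You instead estimate $\mathbb{E}[H^k]$ \emph{directly} via the layer-cake formula, the deterministic identity $\{H>s\}=\{N_s\le\mu_\lambda\}$ with $N_s\sim\mathrm{Bin}(\lambda,p(s))$, the geometric tail bound $p(s)\ge\left(s/(Lr^2)\right)^{d/2}$ from Lemma~\ref{lem:sandwich}, and the exact $\mathrm{Beta}(\mu_\lambda+1,\lambda-\mu_\lambda)$ moment of the uniform order statistic; your Chernoff-versus-constant dichotomy for the $s>s_0$ tail is exactly the right way to keep that remainder of order $(\mu_\lambda/\lambda)^{2k/d}$ for every admissible $\mu_\lambda$. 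Both routes yield the same scaling for $k=1$ and $k=\alpha-1$, and your final balancing of exponents matches the paper's conclusion exactly. What each buys: the paper's argument is economical, recycling the conditional lower bound (whose main purpose in the paper is precisely this step) and the random-search upper bound; yours is more self-contained---it needs only the sandwich lemma, handles both exponents in one computation, and is uniform over all $1\le\mu_\lambda\le\lambda-1$ without invoking $\mu_\lambda\to\infty$, a hypothesis the paper's route inherits from Lemma~\ref{lem:lower1best}.
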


\newotc{We note that $\frac{\mu}{\lambda}\to 0$ as $\lambda\to 0$. This makes sense intuitively: we average points in a sublevel set, which makes sense only if, asymptotically in $\lambda$, this sublevel set shrinks to a neighborhood of the optimum.}

\begin{proof}
The random variable $f(X_{(\mu_\lambda+1)})$ takes its values in $[0,\max f]$ almost surely. As such, thanks to Lemma~\ref{lem:condit-upper}, there exists a constant $C_3>0$ such that for all $\lambda\geq 1$:
 \begin{align*}
      \mathbb{E}\left[f(\bar{X}_{(\mu_\lambda)})\right]&=\mathbb{E}\left[\mathbb{E}\left[f(\bar{X}_{(\mu_\lambda)})\mid f(X_{(\mu_\lambda+1)}) \right]\right]\\
        &\leq \mathbb{E}\left[C_3\left(\frac{1}{\mu_\lambda}f(X_{(\mu_\lambda+1)})+f(X_{(\mu_\lambda+1)})^{\alpha-1}\right)\right]\\
      &= C_3\left(\frac{1}{\mu_\lambda}\mathbb{E}\left[f(X_{(\mu_\lambda+1)})\right]+\mathbb{E}\left[f(X_{(\mu_\lambda+1)})^{\alpha-1}\right]\right)
 \end{align*}
 
Let us first bound $\mathbb{E}\left[f(X_{(\mu_\lambda+1)})\right]$. Thanks to Lemma~\ref{lem:lower1best}, there exist a constant $C_2>0$ and $\lambda_2\in \mathbb{N}$ such that:
\begin{align*}
  \mathbb{E} \left[f(X_{(\mu_\lambda+1)})\right]&\leq \frac{\mu_\lambda^{2/d}}{C_2}\mathbb{E}\left[\mathbb{E} \left[f(X_{(1)})\mid f(X_{(\mu_\lambda+1)})\right]\right]\\
  &=\frac{\mu_\lambda^{2/d}}{C_2}\mathbb{E} \left[f(X_{(1)})\right]
\end{align*}

Thanks to Lemma~\ref{lem:upper1best}, there exists a constant $C_0>0$ and an integer $\lambda_0\in \mathbb{N}$ such that for all integers $ \lambda\geq \lambda_0$:
$$\mathbb{E}_{X_1,\cdots,X_\lambda\sim U(B(0,r))}\left[ f\left(X_{(1)}\right)\right] \leq C_0 \lambda^{-\frac2d}\quad. $$

Then finally for $\lambda\geq\max(\lambda_0,\lambda_2)$
\begin{align*}
    \mathbb{E} \left[f(X_{(\mu_\lambda+1)})\right]\leq\frac{C_0}{C_2} \frac{\mu_\lambda^{2/d}}{\lambda^{2/d}}\quad.
\end{align*}

For the term $\mathbb{E}\left[f(X_{(\mu_{\lambda}+1)})^{\alpha-1}\right]$,
we write thanks to Lemma~\ref{lem:lower1best}
\[
\mathbb{E}\left[f(X_{(\mu_{\lambda}+1)})^{\alpha-1}\right]\leq\frac{\mu_{\lambda}^{2(\alpha-1)/d}}{C_{2}^{\alpha-1}}\mathbb{E}\left[\mathbb{E}\left[f(X_{(1)})\mid f(X_{(\mu_{\lambda}+1)})\right]^{\alpha-1}\right].
\]
Then, by Jensen's inequality for the conditional expectation, we get
\[
\mathbb{E}\left[f(X_{(\mu_{\lambda}+1)})^{\alpha-1}\right]\le\frac{\mu_{\lambda}^{2(\alpha-1)/d}}{C_{2}^{\alpha-1}}\mathbb{E}\left[f(X_{(1)})^{\alpha-1}\right].
\]
Similarly to Lemma~\ref{lem:upper1best}, by replacing $\lVert X-x^\star\rVert^2$ by $\lVert X-x^\star\rVert^{2(\alpha-1)}$, one can show $\mathbb{E}\left[f(X_{(1)})^{\alpha-1}\right]\le C'_{3}\lambda^{-2(\alpha-1)/d}$
for some $C'_{3}>0$ independent of $\lambda$. We thus get $\mathbb{E}\left[f(X_{(\mu_{\lambda}+1)})^{\alpha-1}\right]\le C\frac{\mu_{\lambda}^{2(\alpha-1)/d}}{\lambda^{2(\alpha-1)/d}}$
for some $C>0$ independent of $\lambda$, which, combined with the
above bound on $\mathbb{E}\left[f(X_{(\mu_{\lambda}+1)})\right]$,
concludes the proof of the main bound.\\ To conclude for the final bound, it suffices to notice that this choice of $\mu_{\lambda}$ ensures that the two terms in the upper bound are of the same order.\end{proof}

This theorem gives an asymptotic upper rate of convergence for the algorithm that consists in averaging the best samples to optimize a function with parallel evaluations. The proof of the optimality of the rate is left as further work. We also remark that the selection ratio depends on the dimension and goes to $0$ as $\lambda\to\infty$. It sounds natural since the level sets might be assymetric and then keeping a constant selection rate would give a biased estimate of the optimum (see Figure~\ref{fig:levelset}). However, the choice proposed for $\mu$ is the best one can make with regards to the upper bound we obtained. We make two important remarks about the theorem.
\begin{figure}
	\includegraphics[width=.4\textwidth]{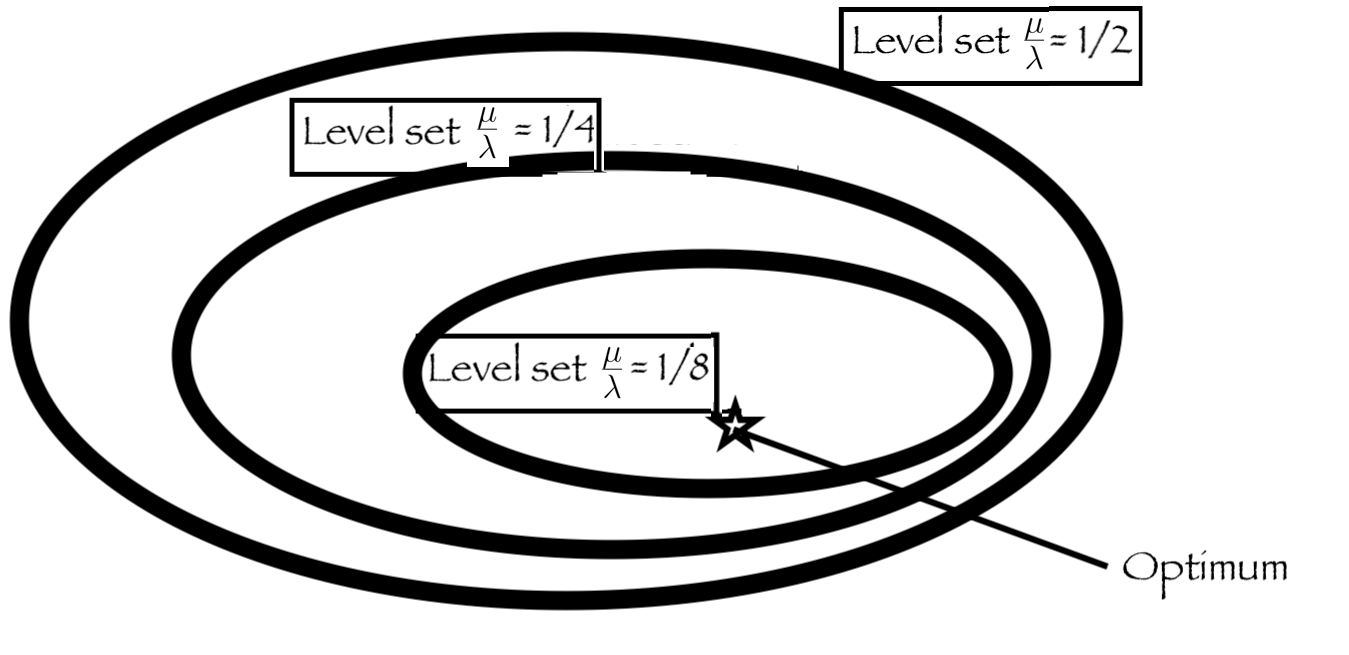}
	\caption{\label{zoli}Assume that we consider a fixed ratio $\mu/\lambda$ and that $\lambda$ goes to $\infty$. The average of selected points, in an unweighted setting and with uniform sampling, converges to the center of the area corresponding to the ratio $\mu/\lambda$: we will not converge to the optimum if that optimum is not the middle of the sublevel. This explains why we need $\mu/\lambda\to 0$ as $\lambda \to \infty$: we do not want to stay at a fixed sublevel. }
	\label{fig:levelset}
\end{figure}
\begin{rmq}[Comparison with random search] The asymptotic
rate obtained for the $\mu$-best averaging approach is of order $\lambda^{-\frac{2(\alpha-1)}{d+2(\alpha-2)}}$,
which is strictly better than the $\lambda^{-2/d}$ rate obtained
with random search, as soon as $d>2$ (because $\alpha>2$) .
This theorem then proves our claim on a wide range of functions. 
\end{rmq}


\begin{rmq}[Comparison with~\cite{ppsnkbest}] ~\cite{ppsnkbest} obtained a rate of order $\lambda^{-1}$ for the sphere function. This rate is better than the one described in Theorem~\ref{thm:principal}. This comes from the bias term in Lemma~\ref{lem:condit-upper}. Indeed for the sphere function, sublevel sets are symmetric, hence the bias term equals $0$, which is not the case in general for functions satisfying Assumption~\ref{ass:principal}. In this paper we are able to deal with potentially non symmetric functions. One can remark, that if the sublevel sets are symmetric the bias term vanishes and we recover the rate of~\cite{ppsnkbest}.
\end{rmq}

\section{Handling wider classes of functions}
\label{sec:wider}
The results we proved are valid for functions satisfying Assumption~\ref{ass:principal}. In particular, the functions are supposed to be regular and have a unique optimum point. In this section, we propose to extend our results to wider classes of functions.

\subsection{Invariance by composition with non-decreasing functions}\label{invar}

{Mathematical results are typically proved under some smoothness assumptions: however, algorithms enjoying some invariance to monotonic transformations of the objective functions do converge on wider spaces of functions as well \cite{monoto}.}
Since the method is based on comparison between the samples, the rank is invariant when the function $f$ is composed with a strictly increasing function $g$. Let $f$ be a function satisfying Assumption~\ref{ass:principal} and $g$ be a strictly increasing function. Consider $h=g\circ f$. Then $h$ admits a unique minimum $x^\star$ coinciding with the one of $f$. As such, the expectation  $\mathbb{E}_{X_{1},...X_{\lambda}\sim U(B(0,r))}\left[\lVert X_{(\mu)}-x^\star\rVert^2\right]$ satisfies the same rates than Theorem~\ref{thm:principal}.
This an immediate consequence of Lemma~\ref{lem:sandwich}. In particular, using the square distance criteria, the rate are preserved even for potentially non regular functions. 
\newotc{For example, our theorem can be adapted to convex piecewise-linear functions, compositions of quadratic functions with non-differentiable increasing functions, and many others.}
Results based on surrogate models  are not applicable here.

\subsection{Beyond unique optima: the convex hull trick, revisited}\label{nonqc}

One of the drawbacks of averaging strategies is that they do not work when there are two basins of optima. For instance, if the two best points $x_{(1)}$ and $x_{(2)}$ have objective values close to those of two distinct optima $x^{\star},y^{\star}$ respectively then averaging $x_{(1)}$ and $x_{(2)}$ may result in a point whose objective value is close to neither. However, in the presence of quasi-convexity this can be countered. It thus makes sense to take into account the possible obstructions to the quasi-convexity of the function and try to counter these, while still maintaining the same basic algorithm as in the case of a unique optimum. \cite{ppsnkbest} proposed to take into account contradictions to quasi-convexity by restricting the number $\mu$ of points used in the averaging. Based on their ideas, we propose the following heuristic.

Let us fix the number of initially selected  points equal to $\mu_{\max}$. Let $x_{(1)},\dots,x_{(\mu_{\max})}$ be these points ranked from best to worst. Define $S_i=(x_{(1)},\dots,x_{(i)})$ and $C_i$ the interior of the convex hull of $S_i$. Assume that there is no tie in fitness values, that is no $i\neq j$ such that $f(x_{i})=f(x_{j})$. Given $\mu_{\max}$, choose $\mu$ maximal such that
\begin{equation}\forall i\leq \mu, x_{(i)} \not \in C_i.\label{oldeq}\end{equation}

One can remark that
$x_{(\mu)}\in C_\mu\Rightarrow f\mbox{ is not quasi-convex on }C_\mu$. However, this may not detect all cases in which $f$ is not quasi-convex on $C_\mu$. More generally,
\begin{equation}\exists j > \mu-1,\ x_{(j)}\in C_\mu \Rightarrow f \mbox{ is not quasi-convex on }C_\mu.\label{neweq}\end{equation} 
If such a $j$ is not $\mu$, Eq. \eqref{oldeq} does not detect the non-quasiconvexity: therefore, \eqref{neweq} detects more non-quasiconvexities than Eq. \eqref{oldeq}.

Therefore we choose $\mu$ maximal such that for all $i<\mu, j>i$, $x_{(j)}\not\in C_i$. This heuristic leads to a choice of average which is "consistent" with the existence of multiple basins.

\begin{figure*}[!h]
    \centering
    \includegraphics[width=0.3\textwidth]{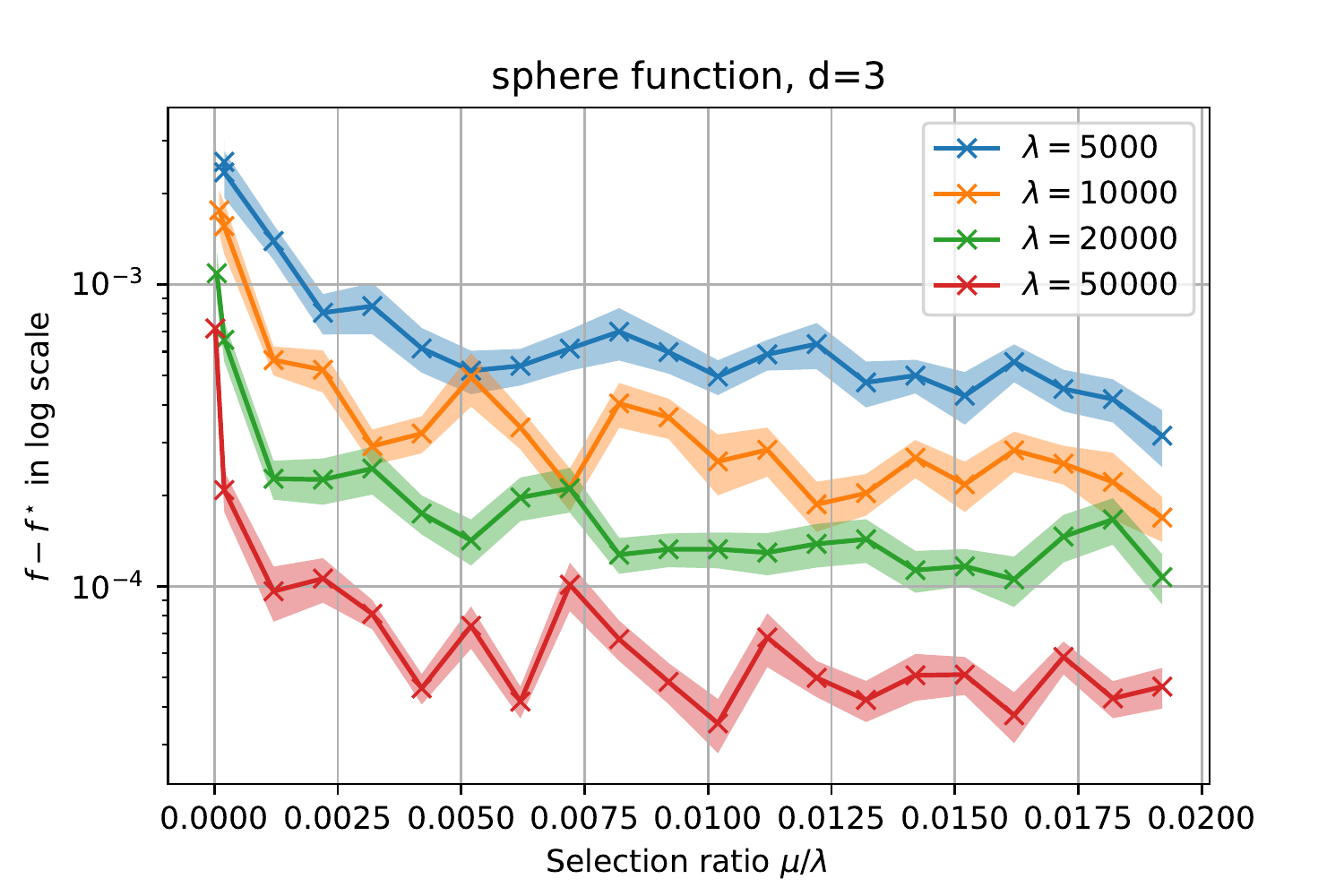}~ \includegraphics[width=0.3\textwidth]{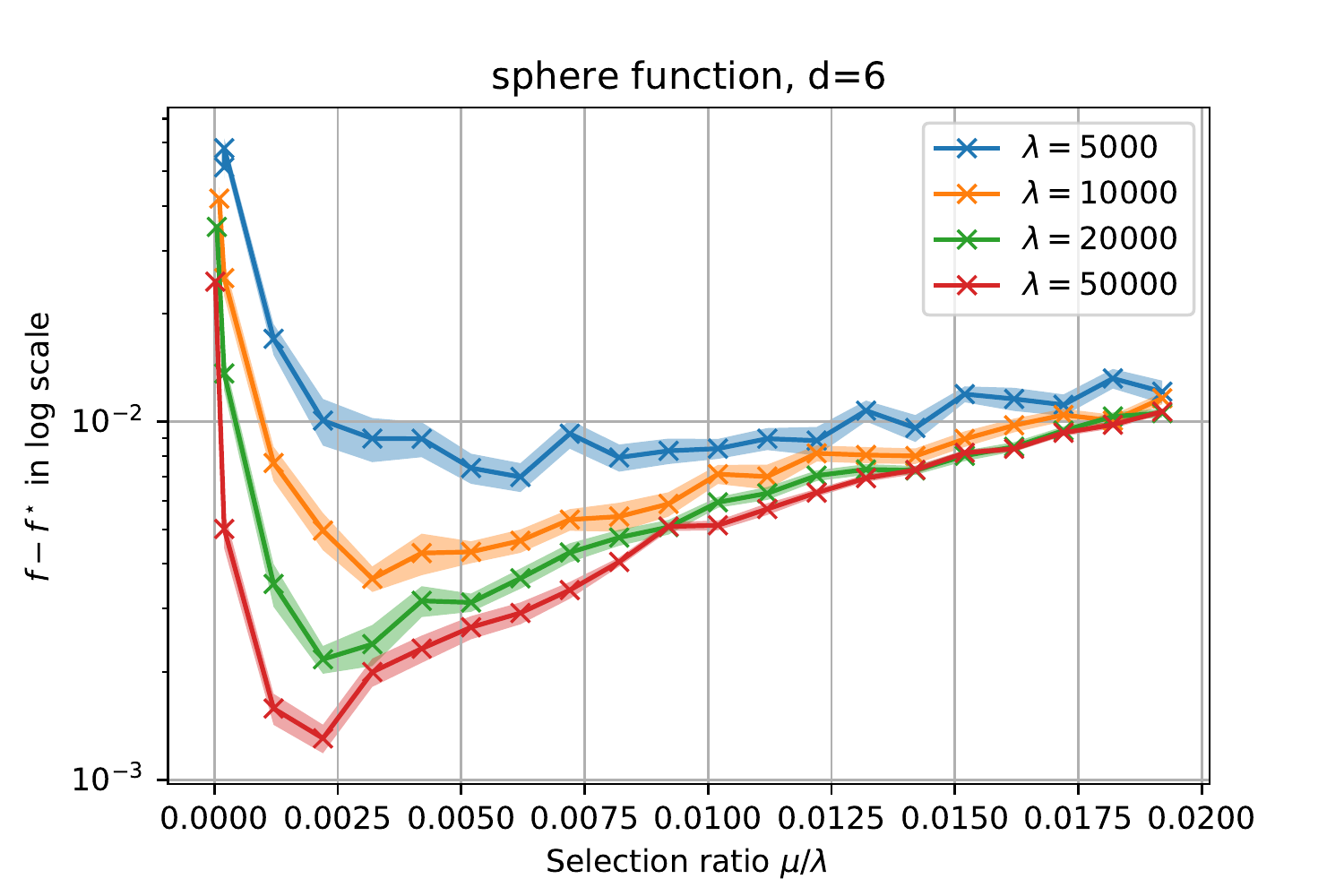}~    \includegraphics[width=0.3\textwidth]{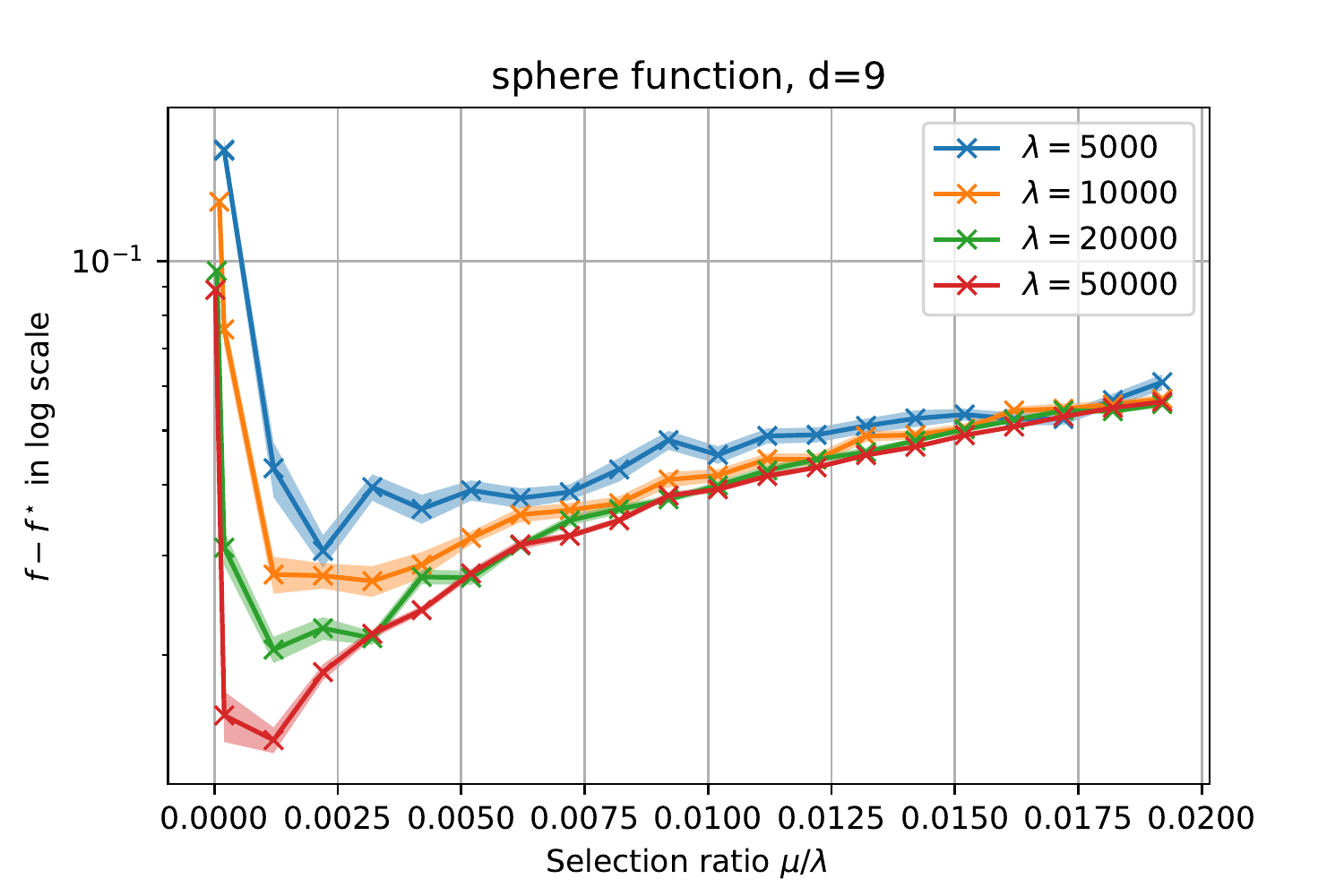}\\ Sphere function\\
    \includegraphics[width=0.3\textwidth]{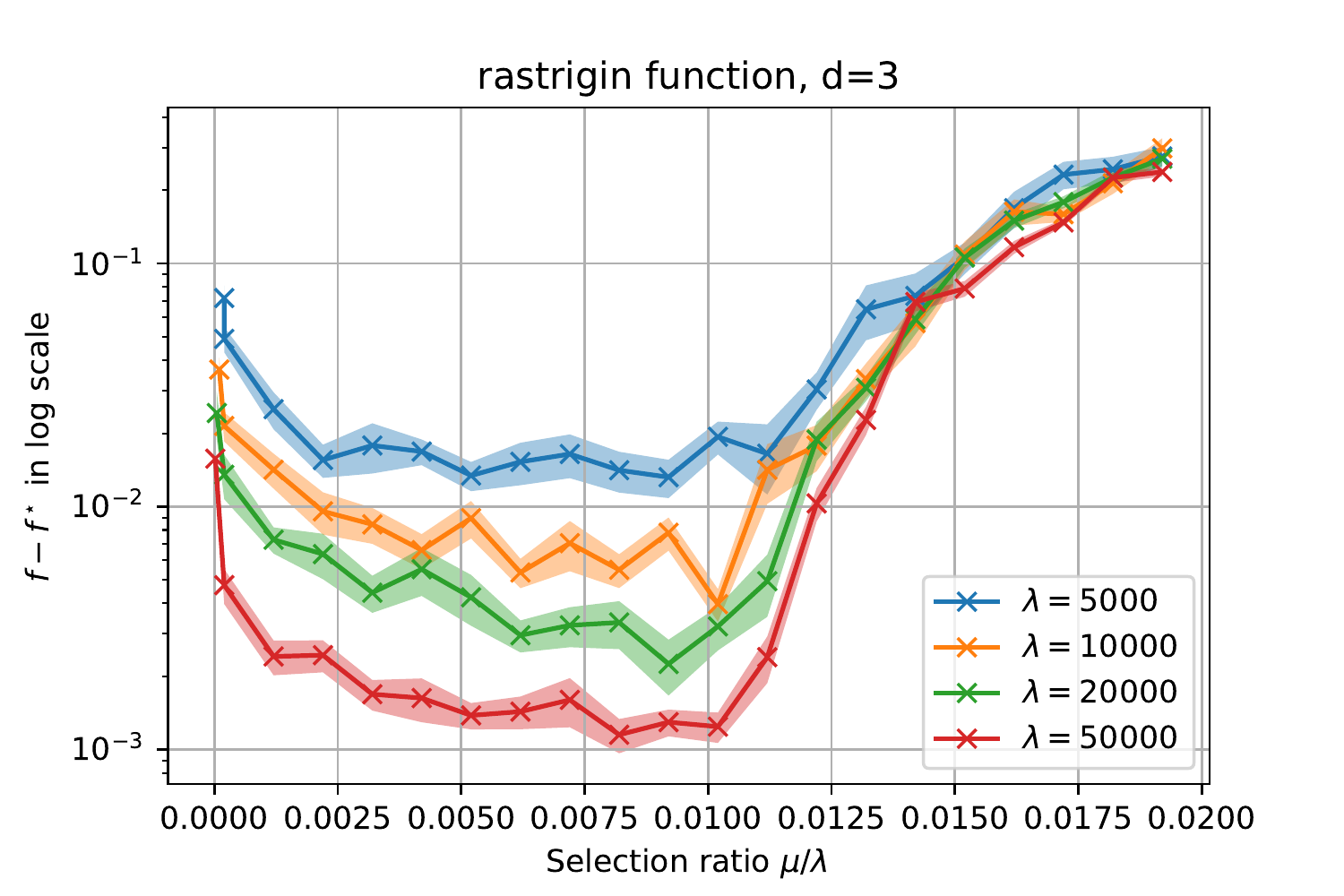}~ \includegraphics[width=0.3\textwidth]{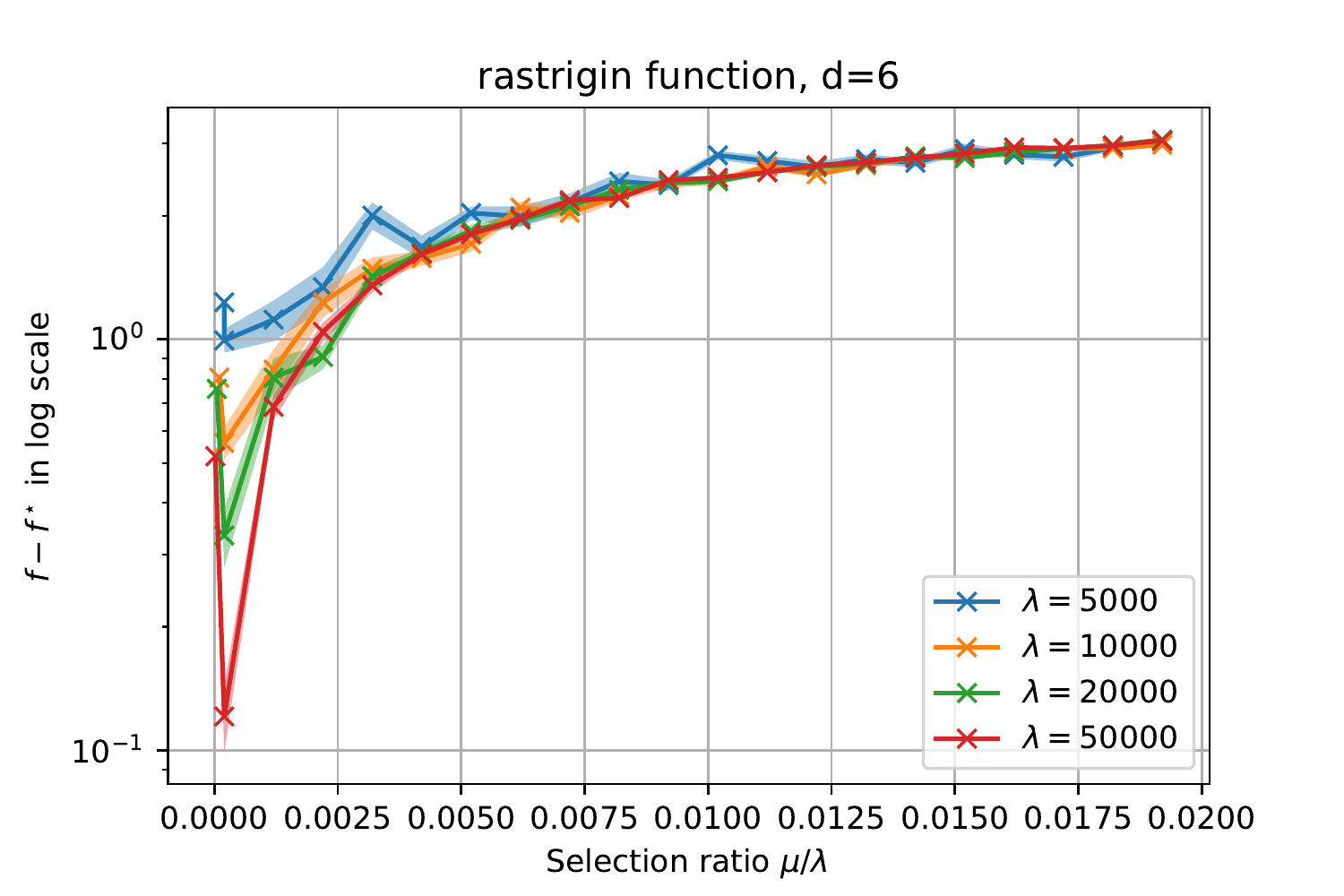}~     \includegraphics[width=0.3\textwidth]{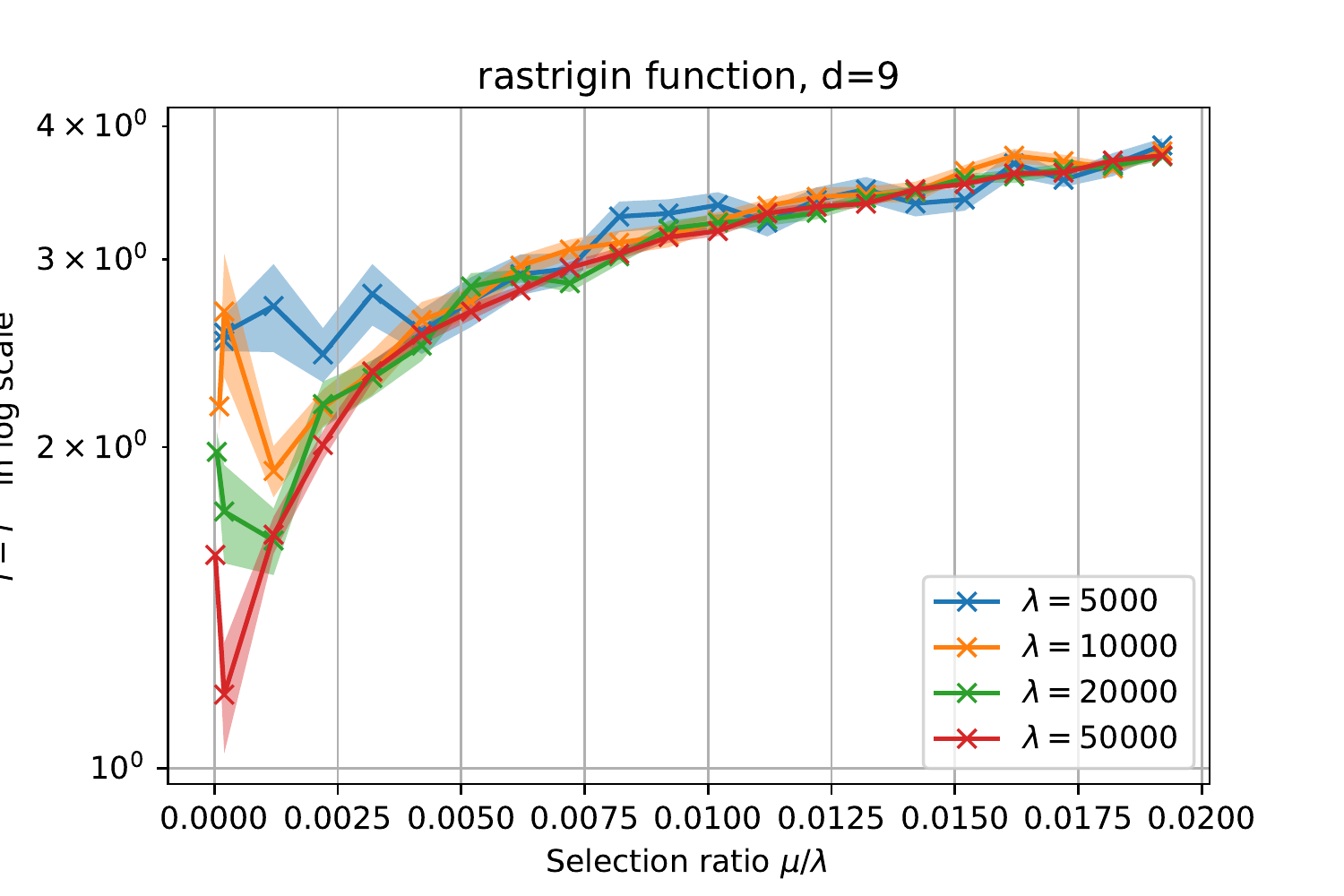}\\ Rastrigin function\\
    
        \includegraphics[width=0.3\textwidth]{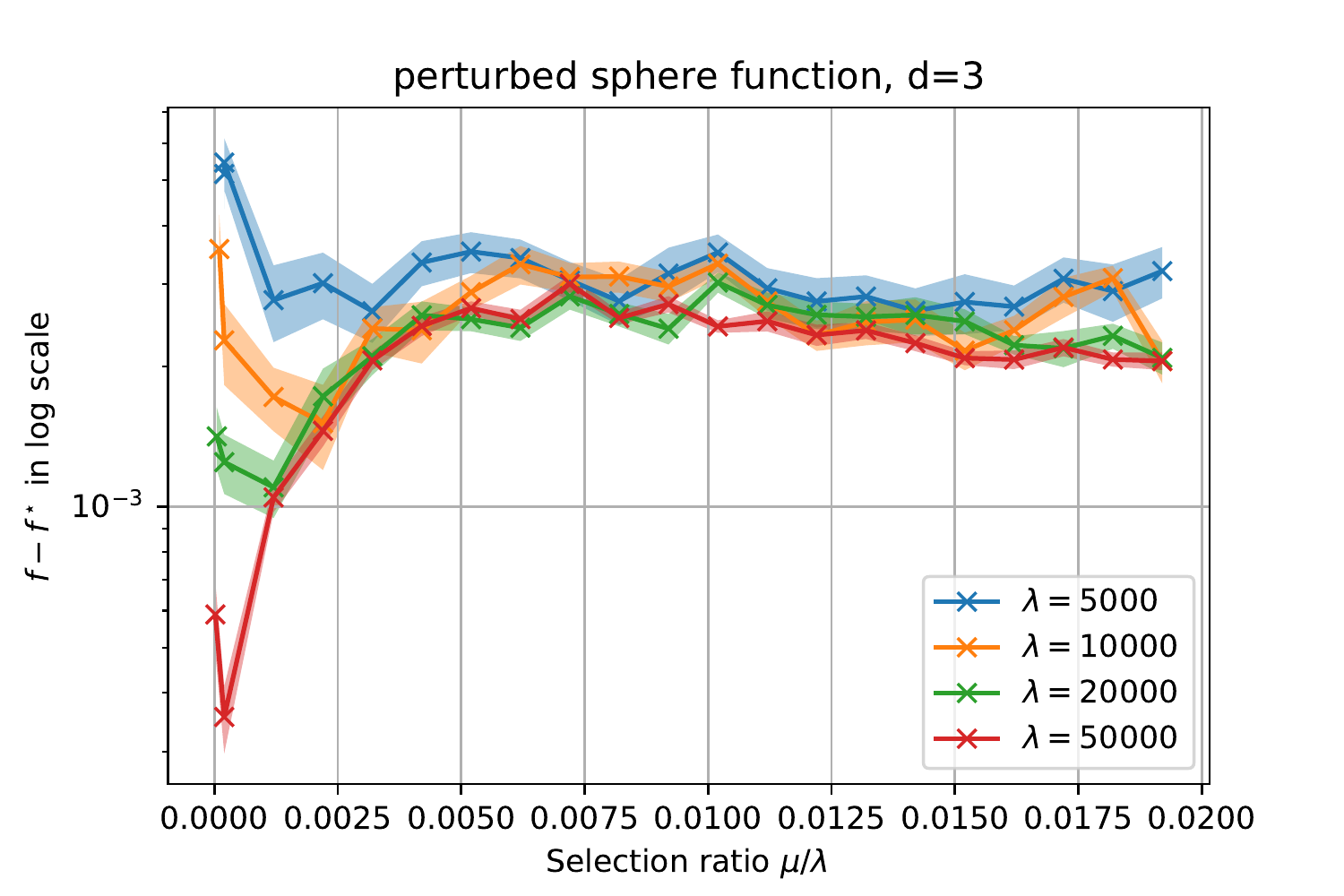}~  \includegraphics[width=0.3\textwidth]{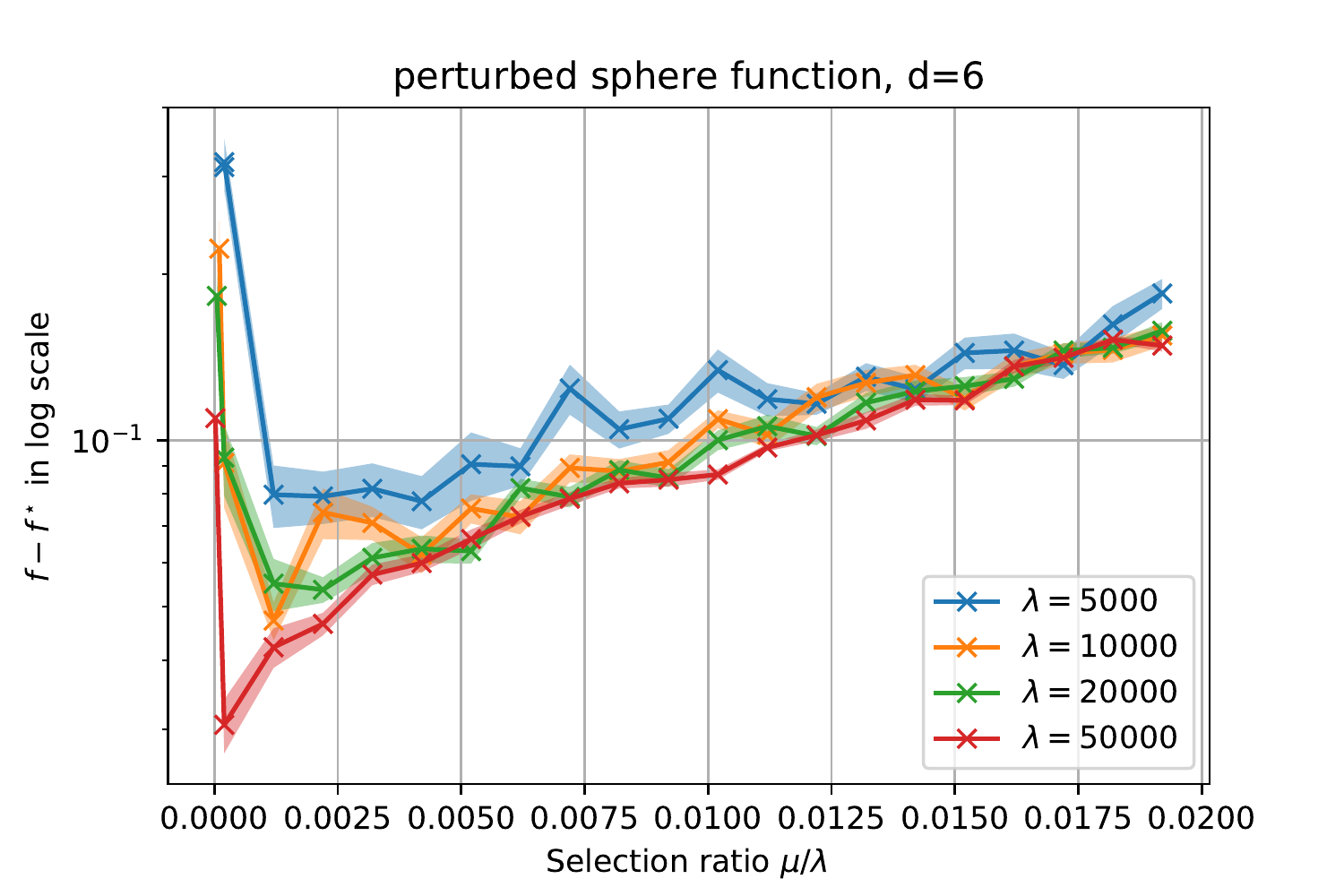}~     \includegraphics[width=0.3\textwidth]{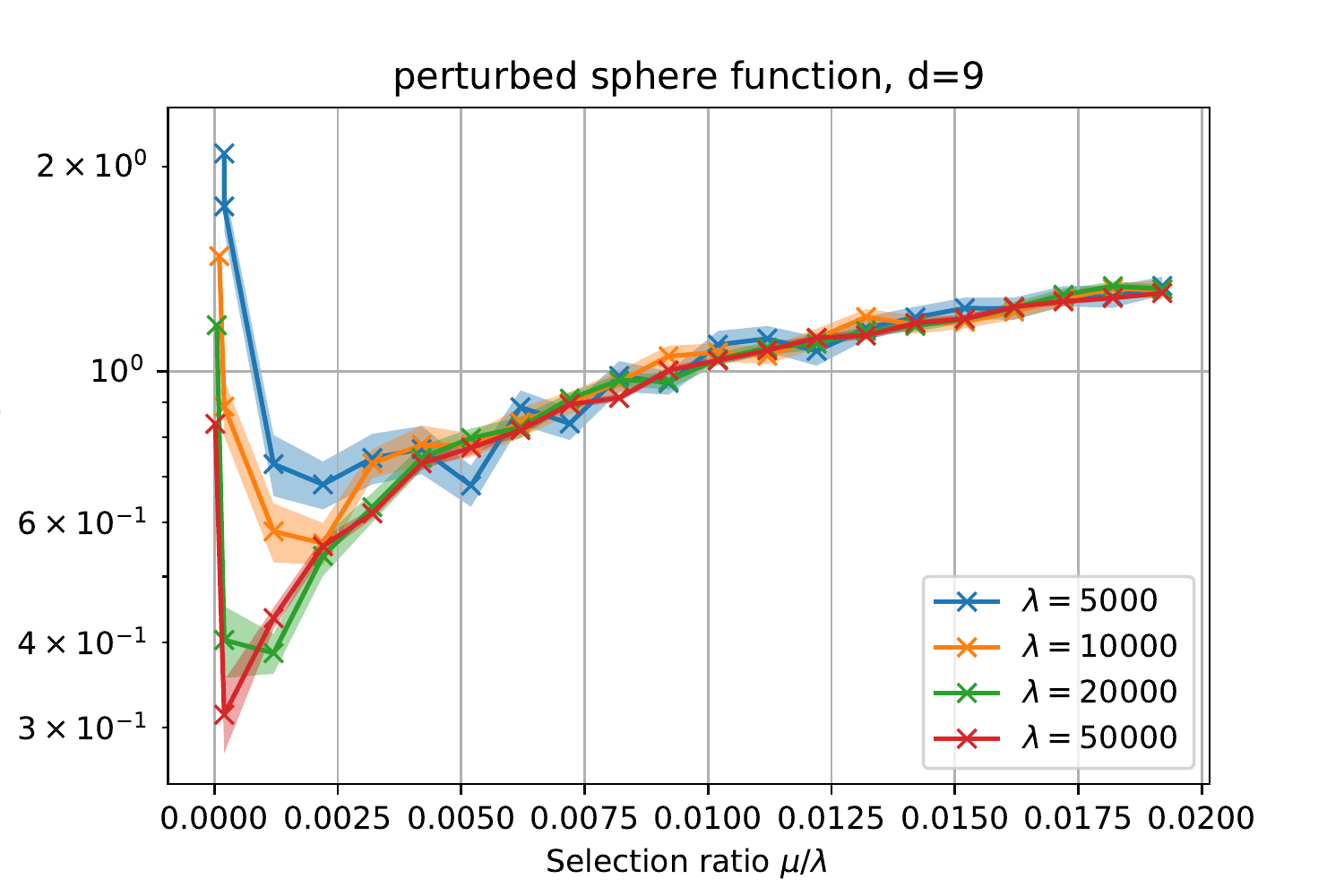}\\ Perturbed sphere function\\
    \caption{Average regret $f(\bar{X}_{(\mu)})-f(x^\star)$ in logarithmic scale in function of the selection ratio $\mu /\lambda$ for different values of $\lambda\in\{5000,10000,20000,50000\}$. The experiments are run on Sphere, Rastrigin and Perturbed Sphere function for different dimensions  $d\in \{3,6,9\}$. All results are averaged over $30$ independent runs. We observe, consistently with our theoretical results and intuition, that (i) the optimal $r=\frac{\mu}{\lambda}$ decreases as $d$ increases (ii) we need a smaller $r$ when the function is multimodal (Rastrigin) (iii) we need a smaller $r$ in case of dissymmetry at the optimum (perturbed sphere).}
    \label{fig:examples}
\end{figure*}
\section{Experiments}
\label{sec:xps}
We divide the experimental section in two parts. In a first part, we focus on validating theoretical findings, then we compare with existing optimization methods.

\subsection{Validation of theoretical findings}

In this section, we will assume that $r=1$ and that the optimum $x^*$ will be sampled uniformly in the ball of radius $0.9$. We compare results on the following functions:
\begin{enumerate}
    \item Sphere function: 
    \begin{align*}
        f(x)=\sum_{i=1}^d (x_i-x_i^\star)^2
    \end{align*}
    \item Rastrigin function: 
    \begin{align*}
        f(x)=\sum_{i=1}^d (x_i-x_i^\star)^2 + 1-\cos{\left(2\pi (x_i-x_i^\star) \right)}
    \end{align*}
    \item Perturbed sphere function:
       \begin{align*}
        f(x)=\sum_{i=1}^d (x_i-x_i^\star)^2 +\left(\sum_{i=1}^d g(x_i-x_i^\star) \right)^3
    \end{align*}
    with $g(x) = x$ if $x>0$ and $-2x$ otherwise. This function has highly non symmetric sublevel sets, but still satisfies Assumption~\ref{ass:principal}.
\end{enumerate}

We plotted in Figure~\ref{fig:examples} the regret $f(\bar{X}_{(\mu)})-f(x^\star)$ as a function of $\mu/\lambda$ for different dimensions $d$ and number of samples $\lambda$. The experiments are averaged over $30$ runs. We remark for instance on the Rastrigin function that for the $\mu$-best averaging approach to be better than random search, we need a very large number of samples as the dimension increases. Overall, these plots validate our theoretical findings that averaging a few best points leads to a better regret than only taking the best one.

\subsection{Comparison with other methods}
\begin{figure*}
    \centering
    \includegraphics{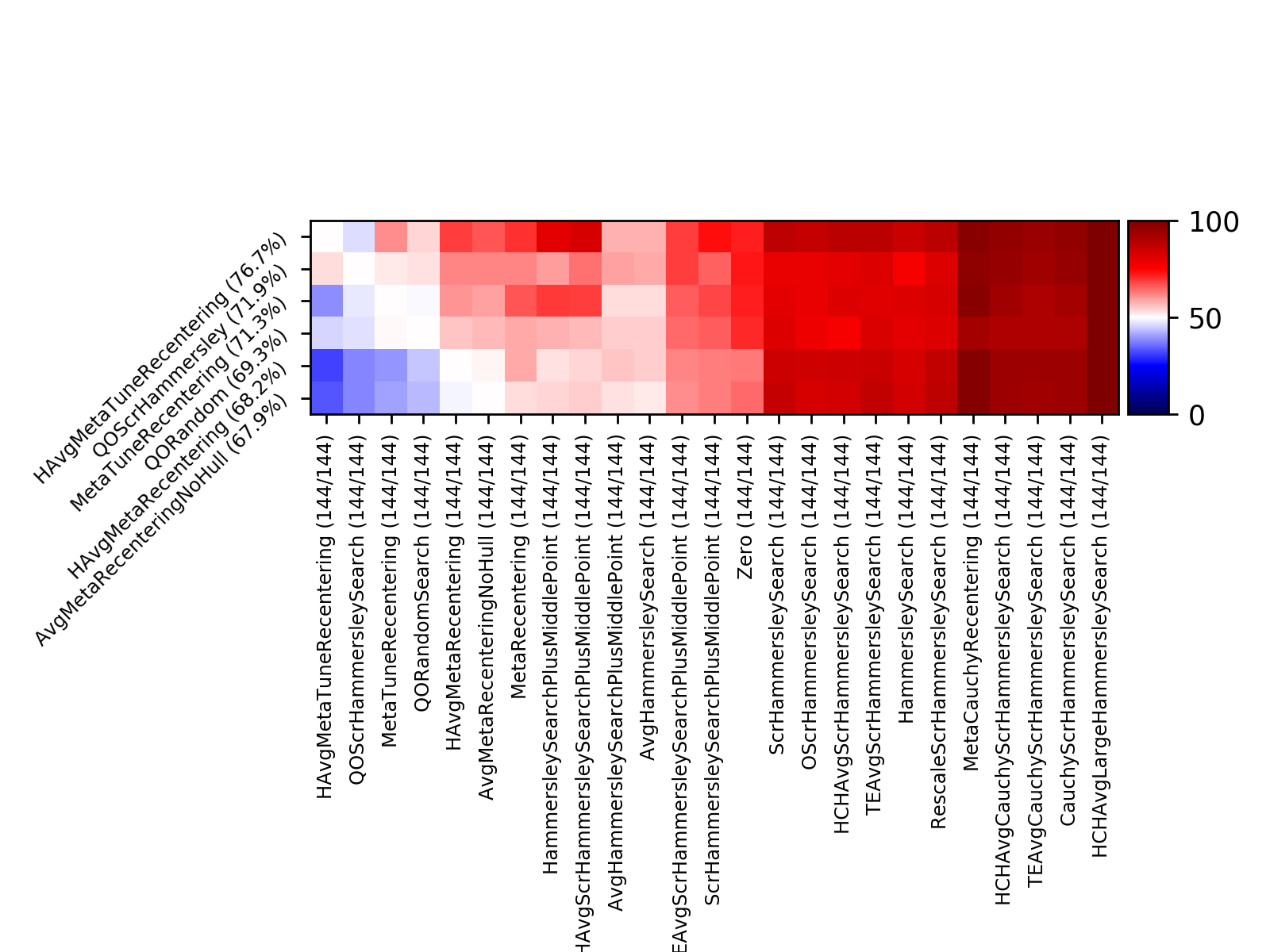}
    \caption{Experimental results: row A and col B presents the frequency (over all 144 test cases) at which A outperforms B in terms of average loss. Then rows are sorted per average winning rate and we keep the 6 best ones. Zero is a naive method just choosing zero: we see that, consistently with \cite{icmldoe}, many methods are worse than that when the dimension is huge compared to the budget.}
    \label{figxp}
\end{figure*}
In this section, we compare averaging strategies with other standard strategies, using the Nevergrad library~\cite{nevergrad}. 
Figure \ref{figxp} presents experimental results based on Nevergrad. Instead of the uniform sampling used in the theoretical results and the previous experimental validation, we use Gaussian sampling in this set of experiments. Following the notation from \cite{ppsnkbest}, we consider distinct averaging prefixes:
\begin{itemize}
    \item \texttt{AvgXX} = method \texttt{XX}, plus averaging of the $\mu=\lambda/(1.1^d)$ best points in dimension $d$.
    \item \texttt{HAvgXX} = = method  \texttt{XX}, plus averaging of the $\lambda/(1.1^d)$ best points, restricted by \newotc{the convex hull trick (Section \ref{nonqc}).}
\end{itemize}
\newotc{Many other methods are included: we refer to \cite{nevergrad} for more information.}
Recently, \cite{icmldoe,ppsnrescaling} pointed out that when the optimum is randomly drawn from a standard normal distribution, we should use rescaling methods for focusing closer to the center in high dimensional setting. Several such methods have been proposed:
\begin{itemize}
\item \texttt{QOXX} = method \texttt{XX}, plus quasi-opposite sampling \cite{quasiopposite}, i.e. each time we draw $x$ with ${\mathcal{N}}$, we also use $-rx$ where $r$ is uniformly independently drawn in $(0,1)$.
\item \texttt{XXPlusMiddlePoint} = method \texttt{XX}, except that there is one point forced at the center of the domain. 
\item \texttt{MetaRecentering} \cite{icmldoe}:  rescaling $\sigma = (1+\log(n))/(4\log(d))$, i.e. we randomly draw with $\sigma \times {\mathcal{N}(0,I_d)}$ instead of ${\mathcal{N}(0,I_d)}$.
\item \texttt{MetaTuneRecentering} \cite{ppsnrescaling}: rescaling $\sigma = \sqrt{\log(\lambda) / d}$, i.e. we randomly draw with $\sigma \times {\mathcal{N}(0,I_d)}$ instead of ${\mathcal{N}(0,I_d)}$.
\end{itemize}

\paragraph{Experimental setup.}

We measure the simple regret and compare methods by average frequency of win against other methods. For each test case, we randomly draw the optimum as ${\mathcal {N}(0,I_d)}$ (multivariate standard Gaussian), with
different budgets $\lambda$ in $\{30, 100, 300,$ $ 1000,3000, 10000, 100000\}$ and
dimensions $d$ in $\{3,10,30,100,300,$ $1000, 3000\}$. Due to their time of evaluation, we did not run the cases with both $d=3000$ and $\lambda = 100000$. We evaluated on 3 different functions: the sphere function, the Griewank function, and the Highly Multimodal function. Previous results~\cite{bousquet} from the literature have already shown that replacing random sampling by scrambled Hammersley sampling (i.e. modern low discrepancy sequences compatible with high dimension) leads to better results.

\paragraph{Analysis of results.}

Analyzing the table results from Figure \ref{figxp}, we observe that
\begin{itemize}
\item Averaging performs well overall: \texttt{AvgXX} is better than \texttt{XX};
\item The quasi-convex trick from Section \ref{nonqc} does work: \texttt{HAvgXX} is better than \texttt{AvgXX};
\item The rescaling strategy from \cite{ppsnrescaling} outperforms the ones in \cite{icmldoe} 
 (\texttt{MetaTuneRecentering} better than \texttt{MetaRecentering} or than \texttt{PlusMiddlePoint}) which are already better than standard quasi-random sampling. Quasi-Opposite sampling is also competitive.
 \end{itemize}
 We also include various methods present in the platform, including those which are based on Cauchy or Hammersley without scrambling (Hammersley in the name without ``Scr'' prefix), or sophisticated uses of convex hulls for estimating the location of the optimum (HCH in the name).

\section{Conclusion}
We proved that averaging $\mu>1$ points rather than picking up the best works even for non quadratic functions, in the sense that the convergence rate is better than the one obtained just by picking up the best point. We also proved faster rates than methods based on meta-models (such as \cite{bach}) unless the objective function is very smooth and low dimensional. \newotc{We also show that our results cover a wider family of functions (Section \ref{invar}).}
\newotc{We also propose a rule for choosing $\mu$, depending on $\lambda$ and the dimension. This shows that the optimal $\mu/\lambda$ ratio decreases to $0$ as the dimension goes to infinity, which is confirmed by Fig. \ref{fig:examples}. We also note, by comparing with \cite{ppsnkbest}, that the optimal ratio should be smaller (Fig. \ref{zoli}), which is confirmed by our experiments on the perturbed sphere (Fig. \ref{fig:examples}).
We also propose a method for adapting this $\mu$, by automatically detecting non-quasi-convexity and reducing it: and prove that it detects more non-quasiconvexities than the method proposed in \cite{ppsnkbest}. Finally, we validate the approach on a reproducible open-sourced platform (Fig. \ref{figxp}).}

\subsection*{Further work}
Using density-dependent weights as in \cite{sumo} should allow us to get rid of the constraint $||x^*||<r$ using a Gaussian sampling instead of a uniform sampling. Better rates might be obtained with rank-dependent weights as in \cite{arnoldweights}. We also leave as further work the proof of the optimality of the rate for this strategy. Moreover, we also believe better rates can be obtained for smoother functions, and leave this study for further work.
\newotc{The case of noisy objective functions \cite{arnoldbeyer} is critical. The study is harder, and good evolutionary algorithms use large populations, making the overall algorithm closer to a small number of one-shot optimization algorithms: actually, some fast algorithms use mainly learning \cite{astete,clop,noisymesh}. Population control\cite{beyerhellwignoise} is successful and its last stage looks exactly like a one-shot optimization method.}
\FloatBarrier
\balance
\bibliographystyle{ACM-Reference-Format}
\bibliography{samples/biblio}
\end{document}